\documentclass[]{article}
\usepackage{float,graphicx,amsmath,amsthm,amssymb,subcaption,caption,bbold,lineno,hyperref,mathtools,authblk}
\mathtoolsset{showonlyrefs}
\setlength{\parindent}{0pt}
\newtheorem{theorem}{Theorem}
\newtheorem{proposition}{Proposition}
\newtheorem{corollary}{Corollary}

\theoremstyle{definition}

\newenvironment{example}
{\pushQED{\qed}\examplex}
{\popQED\endexamplex}
\usepackage{xcolor}
\newtheorem{algorithm}{Algorithm}
\newtheorem{remark}{Remark}
\newtheorem{definition}{Definition}
\usepackage[margin=1.6in]{geometry}
\usepackage[normalem]{ulem}

\newcommand{\bx}{\boldsymbol{x}}
\newcommand{\mcA}{\mathcal{A}}
\newcommand{\bv}{\boldsymbol{v}}
\newcommand{\bu}{\boldsymbol{u}}
\newcommand{\bp}{\boldsymbol{p}}
\newcommand{\bbf}{\boldsymbol{f}}
\newcommand{\p}{\partial}
\newcommand{\bff}{\boldsymbol{f}}
\newcommand{\bb}{\boldsymbol{b}}
\newcommand{\bc}{\boldsymbol{c}}
\newcommand{\dd}{\mathrm{d}}
\newcommand{\bg}{\boldsymbol{g}}
\newcommand{\bz}{\boldsymbol{z}}
\newcommand{\bq}{\boldsymbol{q}}
\newcommand{\br}{\boldsymbol{r}}
\newcommand{\diag}{\mathrm{diag}}
\newcommand{\dsone}{\mathbb{1}}
\newcommand{\R}{\mathbb{R}}
\newcommand{\by}{\boldsymbol{y}}

\title{\bf On the preservation of second integrals by Runge-Kutta methods}
\date{\today}
\author{Benjamin K Tapley\thanks{email: \href{mailto:benjamin.tapley@ntnu.no}{benjamin.tapley@ntnu.no}}}
\affil{Department of Mathematical Sciences, The Norwegian University of Science and Technology, 7491 Trondheim, Norway}
\begin{document}
\maketitle
\begin{abstract}
	One can elucidate integrability properties of ordinary differential equations (ODEs) by knowing the existence of second integrals (also known as weak integrals or Darboux polynomials for polynomial ODEs). However, little is known about how they are preserved, if at all, under numerical methods. Here, we show that in general all Runge-Kutta methods will preserve all affine second integrals but not (irreducible) quadratic second integrals. A number of interesting corollaries are discussed, such as the preservation of certain rational integrals by arbitrary Runge-Kutta methods. We also study the special case of affine second integrals with constant cofactor and discuss the preservation of affine higher integrals. 
\end{abstract}

\section{Introduction}
Many ordinary differential equations (ODEs) possess structure that gives rise to certain qualitative behaviors of the exact solution. For example, the Hamiltonian function remains constant along the flow of Hamiltonian systems \cite{leimkuhler2004simulating} or the phase space volume is preserved along the flow of source-free ODEs \cite{Kang1995}. When solving such ODEs numerically, it is important, especially for long-term simulations, that these features are preserved when possible \cite{hairer2006geometric}. The preservation of first integrals is arguably one of the most important properties for a numerical method to inherit and there exist many well designed numerical methods for this purpose such as discrete gradient methods \cite{dahlby2011preserving}. Runge-Kutta methods, on the other hand, will always preserve all linear first integrals \cite{hairer2006geometric} and they preserve all quadratic first integrals when their stability matrix vanishes \cite{cooper1987stability}. Runge-Kutta methods cannot preserve all cubic (or higher) first integrals \cite{hairer2006geometric}, however there exist particular Runge-Kutta methods that can preserve a polynomial Hamiltonian \cite{celledoni2009energy}. The theory behind preserving first integrals when solving ODEs numerically has been the center of many influential studies \cite{quispel1997solving,quispel2008new,hairer2010energy,wu2013efficient}, however, many ODEs also possess \textit{second integrals} \cite{goriely2001integrability}, which are also important for numerical methods to preserve. Second integrals also go by the names \textit{weak integrals}, \textit{special integrals} \cite{albrecht1996algorithms}, \textit{algebraic invariant curves }\cite{hewitt1991algebraic} or \textit{particular algebraic solutions} \cite{jouanolou2006equations}, for example, and will refer to polynomial second integrals of polynomial vector fields as \textit{Darboux polynomials} \cite{ollagnier1993non}. Second integrals are important to study as they behave like first integrals on their zero level sets. These level sets also divide phase space into regions that are qualitatively different (e.g., bounded or unbounded) and represent barriers for transport. Moreover, many first integrals, especially rational or time-dependent first integrals, are constructed by taking certain products and/or quotients of second integrals. Despite their importance, the preservation of second integrals by numerical methods has not been addressed until only very recently \cite{celledoni2019discrete,celledoni2019using}. In this paper we will discuss the second-integral-preserving properties of Runge-Kutta methods. \\


Consider an autonomous ODE in $\mathbb{R}^n$
\begin{equation}\label{ODE1}
\dot{\bx} = \bff(\bx),
\end{equation}
then a \textit{second integral} \cite{goriely2001integrability} of the ODE \eqref{ODE1} is a function $p(\bx)$ that satisfies 
\begin{equation}\label{DP}
\dot{p}(\bx) = \bff(\bx)^T\nabla p(\bx) = c(\bx)p(\bx),
\end{equation}
where the dot denotes $\frac{\mathrm{d}}{\mathrm{d}t}$ and $c(\bx)$ is called the \textit{cofactor} of $p(\bx)$. If $\bff(\bx)$ is polynomial, then $p(\bx)\in\mathbb{K}_m[\bx]$ is referred to as a Darboux polynomial, where $\mathbb{K}_m[\bx]$ is the class of polynomials over the field $\mathbb{K}$ of degree $m$ in the variables $\bx$. The discrete-time analogue of $p(\bx)$, is referred to as a \textit{discrete second integral} of a map $\varphi_h:\R^n\rightarrow\R^n$ (or \textit{discrete Darboux polynomial} when $\varphi_h(\bx)$ is polynomial or rational), which is a function $p(\bx)$ that satisfies
\begin{equation}\label{cofactoreq}
p(\varphi_h(\bx))=\tilde{c}(\bx)p(\bx),
\end{equation}
where $\tilde{c}(\bx)$ is called the \textit{discrete cofactor} of $p(\bx)$. This is a discrete analogue of equation \eqref{DP} and was recently introduced in \cite{celledoni2019discrete,celledoni2019using}. In our context, $\varphi_h(\bx)\approx\bx(h)$ is a numerical method designed to approximate the solution of the ODE \eqref{ODE1} for some time step $h$. The map $\varphi_h$ is said to \textit{preserve} a Darboux polynomial $p(\bx)$ if there exists a discrete cofactor $\tilde{c}(\bx)$ that satisfies \eqref{cofactoreq} and is independent of the choice of basis for $p(\bx)$. The last point is important, otherwise any polynomial $p(\bx)$ is a discrete Darboux polynomial by letting $\tilde{c}(\bx):=p(\varphi_h(\bx))/p(\bx)$.\\ 

In this paper we will mainly consider ODEs with one or more affine second integrals of the form $p(\bx) = \bp^T \bx+r$ where $\bp\in \mathbb{R}^n$ and $r\in\R$. Note that we can take the constant $r=0$ without loss of generality as we will do throughout the paper. In particular, we focus on the case where the map $\varphi_h$ comes about as a Runge-Kutta method applied to an ODE. That is, letting $\varphi_h(\bx)$ denote one step of an $s$-stage Runge-Kutta method applied to \eqref{ODE1} with initial condition $\bx$ and Butcher tableau given by 
\begin{equation}\label{butchertable}
\begin{array}{c|c}
\mathcal{C}&\mcA\\
\hline
~&\bb^T
\end{array}
\end{equation}
then this defines the method
\begin{align}
\bg_i = & \bx + h \sum_{j=1}^{s} a_{ij}\bbf(\bg_j), \quad{\mathrm{for}~i=1,...,s} \label{rk0}\\
\varphi_h(\bx) = & \bx + h \sum_{j=1}^{s} b_{j}\bbf(\bg_j) \label{rk1},
\end{align}
where $\varphi_h(\bx)$ is the Runge-Kutta map with time-step $h$ applied to the initial point $\bx$ and $\mcA=[a_{ij}]$. For explicit Runge-Kutta maps the sum in equation \eqref{rk0} runs from 1 to $i-1$ as $\mcA$ is strictly lower triangular. \\

The paper begins by proving that Runge-Kutta methods cannot preserve irreducible quadratic second integrals. We then prove that Runge-Kutta methods always preserve affine second integrals and discuss the implications of this theorem including the fact that all Runge-Kutta methods preserve all rational integrals that are affine in the numerator and denominator. We then focus on the special case of affine second integrals with constant cofactor. The final section is on the preservation of affine higher integrals. 

\section{The non-preservation of quadratic second integrals}
\begin{theorem}\label{thm:noquad}
	Given an autonomous ODE with an irreducible quadratic Darboux polynomial $p(\bx)\in\mathbb{K}_2[\bx]$, then no Runge-Kutta method can preserve $p(\bx)$. 
\end{theorem}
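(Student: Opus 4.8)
The plan is to turn preservation into a divisibility statement and then expand in the time step. Write the quadratic Darboux polynomial as $p(\bx)=\bx^T A\bx+\bv^T\bx+w$ with $A=A^T\neq0$, so that $\nabla p(\bx)=2A\bx+\bv$ and the Hessian is the constant matrix $2A$. Because $p$ is quadratic its Taylor expansion terminates exactly: for any displacement $\bz$,
\begin{equation}
p(\bx+\bz)=p(\bx)+\nabla p(\bx)^T\bz+\bz^T A\,\bz .
\end{equation}
Since $p$ is irreducible, $(p)$ is prime and ``$p$ divides $g$'' is equivalent to ``$g$ vanishes on the quadric $V=\{p=0\}$''. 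Preservation then amounts to $p\mid p\circ\varphi_h$ for all small $h$, i.e.\ to the invariance of $V$ under $\varphi_h$; otherwise the putative discrete cofactor $\tilde c=p(\varphi_h(\bx))/p(\bx)$ has a pole along $V$ and fails to be basis-independent.

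First I would derive an exact identity for $p(\varphi_h(\bx))$. Applying the terminating expansion to $\varphi_h(\bx)=\bx+h\sum_i b_i\bff(\bg_i)$ and to each stage $\bg_i=\bx+h\sum_j a_{ij}\bff(\bg_j)$, and using the stagewise Darboux relation $\nabla p(\bg_i)^T\bff(\bg_i)=c(\bg_i)p(\bg_i)$ together with $\nabla p(\bg_i)=\nabla p(\bx)+2hA\sum_j a_{ij}\bff(\bg_j)$, the terms linear in $\nabla p(\bx)$ can be eliminated. Writing $S_{ij}:=\bff(\bg_i)^T A\,\bff(\bg_j)$ and $M:=\diag(\bb)\mcA+\mcA^T\diag(\bb)-\bb\bb^T$ (the Cooper matrix that governs quadratic first integrals), this produces the exact relation
\begin{equation}
p(\varphi_h(\bx))=p(\bx)+h\sum_{i=1}^s b_i\,c(\bg_i)\,p(\bg_i)-h^2\sum_{i,j=1}^s M_{ij}\,S_{ij}.
\end{equation}
Preservation therefore requires that the cofactor term and the curvature term $h^2\sum_{ij}M_{ij}S_{ij}$ together vanish on $V$ for all $h$.

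The core of the argument is to read off necessary conditions order by order in $h$ and show they cannot all hold. On $V$ one checks that each $p(\bg_i)=O(h^2)$, so the cofactor term is $O(h^3)$ there and the leading $h^2$ contribution comes entirely from the curvature term, reducing on $V$ to $h^2\,(\bff^T A\bff)\sum_{ij}M_{ij}=h^2\,(\bff^T A\bff)(2\,\bb^T\mcA\dsone-1)$. Provided $\bff^T A\bff$ does not vanish identically on $V$ (the generic situation, which I would isolate and treat the complementary degenerate case separately), this forces the order-two condition $\bb^T\mcA\dsone=\tfrac12$. One then passes to the next orders, where the cofactor term $\sum_i b_i c(\bg_i)p(\bg_i)$ begins to contribute; it is precisely here that a genuinely nonzero cofactor $c$ enters, generating a tower of constraints on the tableau. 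Conceptually the cleanest bookkeeping is via backward error analysis: invariance of $V$ for all $h$ forces $p$ to be a Darboux polynomial of \emph{every} term of the modified vector field $\bff+h^r\bff_{r+1}+\cdots$ of a method of order $r$, and in particular of the nonzero principal error field $\bff_{r+1}$, which is a fixed combination of elementary differentials of $\bff$.

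The main obstacle I anticipate is this final step: ruling out, uniformly over all methods, the possibility that the first error term distinguishing $\varphi_h$ from the exact flow is tangent to the quadric, i.e.\ that the relevant elementary-differential expression lies in $(p)$. This is where irreducibility is indispensable: it both legitimises the geometric ``vanishing on $V$'' reformulation used throughout and guarantees we are not in the reducible case, where $p$ factors into affine Darboux polynomials that \emph{are} preserved by every Runge--Kutta method, so that no obstruction can exist. It is also where the nonvanishing of the cofactor is decisive, since for $c\equiv0$ (a first integral) the cofactor term drops out and methods with $M=0$ preserve $p$ exactly. I would therefore close the argument by showing that, modulo $p$, the principal error field cannot be $A$-tangent to $V$ when $c\not\equiv0$, which contradicts preservation and establishes the theorem.
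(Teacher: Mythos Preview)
Your exact identity
\[
p(\varphi_h(\bx))=p(\bx)+h\sum_{i}b_i\,c(\bg_i)\,p(\bg_i)-h^2\sum_{i,j}M_{ij}\,S_{ij}
\]
is precisely what the paper derives (its equation after substituting $\bx=\bg_i-h\sum_j a_{ij}\bff(\bg_j)$ and using $2\bg_i^TQ\bff(\bg_i)=c(\bg_i)p(\bg_i)$). So the setup is identical. The divergence, and the gap, is in how you try to close.

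Your order-by-order/backward-error route stalls for the reason you yourself flag: at order $h^2$ on $V$ you only recover $\bb^T\mcA\dsone=\tfrac12$, which every second-order method already satisfies, and pushing to higher orders produces an unbounded tower of elementary-differential conditions whose incompatibility you do not establish. The proposal ends with the key step (``the principal error field cannot be $A$-tangent to $V$ when $c\not\equiv0$'') stated as a goal rather than proved, so as written it is not a proof.

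The idea you are missing, and what the paper does instead, is to iterate the \emph{same} terminating quadratic expansion at the stage level. Applying it to each $p(\bg_i)$ and again substituting $\bx=\bg_j-h\sum_k a_{jk}\bff(\bg_k)$ gives
\[
p(\bg_i)=p(\bx)+h\sum_j a_{ij}\,c(\bg_j)\,p(\bg_j)+H_i,\qquad H_i=h^2\!\!\sum_{j,k}\bigl(a_{ij}a_{ik}-a_{ij}a_{jk}-a_{ik}a_{kj}\bigr)S_{jk},
\]
a linear system for $\mathbf P=(p(\bg_1),\dots,p(\bg_s))^T$ with solution $\mathbf P=(I-h\mcA D_c)^{-1}\bigl(p(\bx)\dsone+\mathbf H\bigr)$. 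Substituting back, $p(\varphi_h(\bx))$ splits into a multiple of $p(\bx)$ plus terms built only from $\mathbf H$ and the Cooper-matrix remainder. Divisibility by $p(\bx)$ (generically in $\bff$) therefore forces \emph{two} families of tableau identities simultaneously:
\[
b_ib_j-b_ia_{ij}-b_ja_{ji}=0\quad\text{and}\quad a_{ij}a_{ik}-a_{ij}a_{jk}-a_{ik}a_{kj}=0\qquad\text{for all }i,j,k.
\]
The second family involves $\mcA$ alone: setting $j=k=i$ gives $a_{ii}=0$, then $j=k$ gives $a_{ij}=0$, so $\mcA=0$; the first then collapses to $b_ib_j=0$, hence $\bb=0$. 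This kills every Runge--Kutta method in one stroke, with no asymptotic expansion and no appeal to modified vector fields. Your Cooper-matrix identity is half of this picture; the stage-level repetition producing the purely-$\mcA$ constraints is the half that actually closes the argument.
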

\begin{proof}
	The proof begins in a similar way to that of \cite[Thm. 2.2]{hairer2006geometric}. Consider a quadratic second integral $p(\bx)=\bx^TQ\bx$ for some symmetric adjoint matrix $Q\in\mathbb{K}^{n\times n}$. Then 
	\begin{equation}
		p(\varphi_h(\bx)) = \bx^TQ\bx + h \sum_{i=1}^{s}b_i\bff(\bg_i)^TQ\bx+h \sum_{j=1}^{s}b_j\bx^TQ\bff(\bg_i) + h^2 \sum_{i,j=1}^{s}b_ib_j\bff(\bg_i)^TQ\bff(\bg_j).
	\end{equation}
	Inserting $\bx  = \bg_i-h\sum_{j=1}^{s}a_{ij}\bff(\bg_j)$ in the two $O(h)$ terms yields
	\begin{equation}
	p(\varphi_h(\bx)) = \bx^TQ\bx + 2h \sum_{i=1}^{s}b_i\bg_i^TQ\bff(\bg_i) + h^2 \sum_{i,j=1}^{s}(b_ib_j-b_ia_{ij}-b_ja_{ji})\bff(\bg_i)^TQ\bff(\bg_j). \label{proof0}
	\end{equation}
	Note that equation \eqref{DP} implies $\bg_i^TQ\bff(\bg_i) = c(\bg_i) p(\bg_i)$. Computing $p(\bg_i)$ gives
	\begin{align}
		 p(\bg_i) = & \bx^TQ\bx + 
		 h \sum_{i=1}^{s}a_{ij}\bff(\bg_j)^TQ\bx+
		 h \sum_{j=1}^{s}a_{ik}\bx^TQ\bff(\bg_k) + 
		 h^2 \sum_{j,k=1}^{s}a_{ij}a_{ik}\bff(\bg_j)^TQ\bff(\bg_k) \\	
		 = & p(\bx) +
		 h \sum_{j=1}^{s}a_{ij}c(\bg_j)p(\bg_j) + \overbrace{ 
		 h^2\sum_{j,k=1}^{s}(a_{ij}a_{ik}-a_{ij}a_{jk}-a_{ik}a_{kj})\bff(\bg_j)^TQ\bff(\bg_k)}^{:=H_i}, \label{proof01}
	\end{align}
	where we have again inserted the expression for $\bx$ and used equation \eqref{DP} to arrive at \eqref{proof01}. Note that this is now a linear system of $s$ equations for $p(\bg_j)$, which we can solve for. Denoting by $\mathbf{P} := (p(\bg_1),...,p(\bg_s))^T\in\R^{s}$ and $\mathbf{H}\in\R^{s}$ the vector whose $i$th component is defined in equation \eqref{proof01} then 
	\begin{align}
		\mathbf{P} & = \mathbb{1}p(\bx) + h\mcA D_c\mathbf{P}+\mathbf{H}\\
		& = (I - h\mcA D_c)^{-1}\left(p(\bx)\mathbb{1}+\mathbf{H}\right),
	\end{align}
	where $D_c:=\diag([c(\bg_1),...,c(\bg_s)])\in\R^{s\times s}$ and $\mathbb{1}\in\R^{s}$ is the ones vector. Equation \eqref{proof0} therefore reads
	\begin{align}
		p(\varphi_h(\bx)) =& p(\bx) + h \bb^TD_c\mathbf{P} + h^2 \sum_{i,j=1}^{s}(b_ib_j-b_ia_{ij}-b_ja_{ji})\bff(\bg_i)^TQ\bff(\bg_j)\\
		=& p(\bx) + h \bb^TD_c(I - h\mcA D_c)^{-1}\left(p(\bx)\mathbb{1}+\mathbf{H}\right) \\
		&\qquad + h^2 \sum_{i,j=1}^{s}(b_ib_j-b_ia_{ij}-b_ja_{ji})\bff(\bg_i)^TQ\bff(\bg_j) \label{proof02}.
	\end{align}
	For the discrete cofactor $\tilde{c}(\bx) = p(\varphi_h(\bx))/p(\bx)$ to be independent of $p(\bx)$, we require that $p(\bx)$ divides equation \eqref{proof02}. This is only true if the following conditions are satisfied 
	\begin{equation}
		b_ib_j-b_ia_{ij}-b_ja_{ji} = 0, \quad a_{ij}a_{ik}-a_{ij}a_{jk}-a_{ik}a_{kj} = 0, \quad \text{for} \quad i,j,k=1,...,s
	\end{equation}
	which is only true when $a_{i,j}=0$ and $b_j=0$ for $i,j=1,...,s$. Therefore no Runge-Kutta method can, in general, preserve irreducible quadratic second integrals. 
\end{proof}

Note that above theorem only applies to irreducible second integrals as Runge-Kutta methods can preserve quadratic second integrals that are the product of affine second integrals. For example, when a quadratic second integral can be expressed as $p(\bx) = \bx^TQ\bx = (\bx^T\bu)(\bv^T\bx)$ where $Q :=  \bu\bv^T$ then the preservation of $p(\bx)$ is equivalent to the preservation of the two linear second integrals $\bx^T\bu$ and $\bx^T\bv$. Due to theorem \ref{thm: lin RK cofactors} in the next section, this is always true for Runge-Kutta methods. \\

Furthermore, in \cite{celledoni2019discrete} many examples of Kahan's method (which is a Runge-Kutta method for quadratic vector fields \cite{celledoni2012geometric}) preserving irreducible quadratic Darboux polynomials are given. This suggests that despite the negative result of theorem \ref{thm:noquad}, there do exist cases where certain Runge-Kutta methods can preserve certain quadratic second integrals. We will leave this for a future study and instead focus now on the preservation of affine second integrals by Runge-Kutta methods. 

\section{The preservation of affine second integrals}
We begin with an example of a planar ODE and its discretisation by a second-order Runge-Kutta method. 
\begin{example}
	Consider the following ODE in two dimensions
	\begin{equation}
	\dot{x}={x}^{2}+2\,xy+3\,{y}^{2},\quad	\dot{y}=2\,y \left( 2\,x+y \right) , \label{ralston}
	\end{equation}
	This ODE was studied in \cite{collins1995algebraic} and has the following three linear Darboux polynomials
	\begin{equation}
	p_1(\bx) = x + y, \quad p_2(\bx) = x - y, \quad p_3(\bx) = y,
	\end{equation}
	that correspond to the cofactors 
	\begin{equation}
	c_1(\bx) = x + 5y, \quad c_2(\bx) = x - y, \quad c_3(\bx) = 4x+2y.
	\end{equation}

		\begin{figure}[h!]
		\centering
		\begin{subfigure}{0.49\textwidth}
			\includegraphics[width=\linewidth]{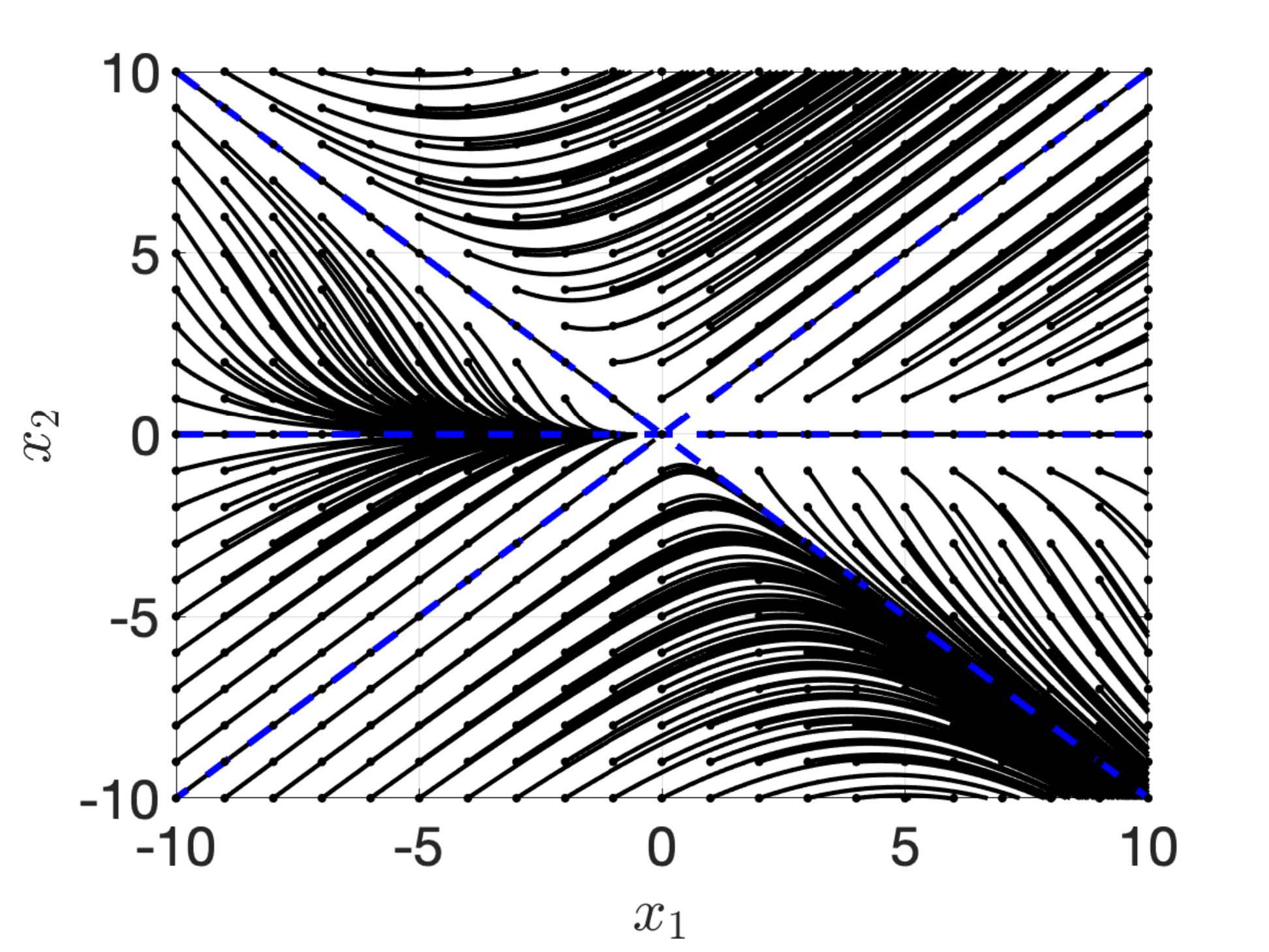}
			\caption{}
			\label{fig:phaselinesralston}
		\end{subfigure}
		\begin{subfigure}{0.49\textwidth}
			\includegraphics[width=\linewidth]{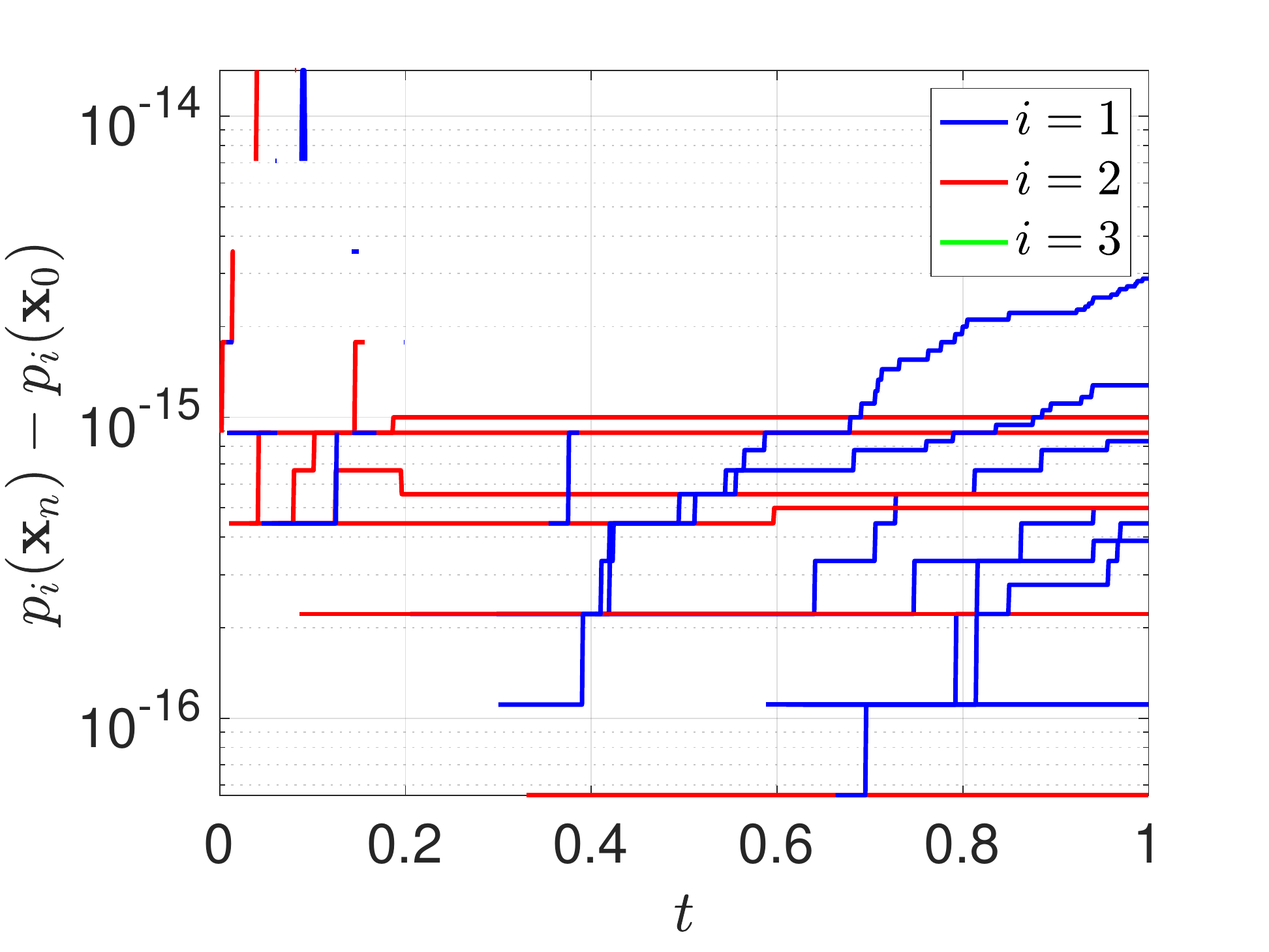}
			\caption{}
			\label{fig:dperrorralston}
		\end{subfigure}
		\caption{The phase portrait of the ODE \eqref{ralston} and the errors of the second integrals $p_i(\bx)$ for initial conditions satisfying $p_i(\bx_0)=0$. Note that $p_3(\bx_n)-p_3(\bx_0)=0$ and therefore does not show on the semi-log axis. The initial conditions are shown by black dots and are located on the grid $(-10 + i,-10+j)$ for $i,j=0,...,20$. }
		\label{fig:ralston}
	\end{figure}
	Consequently, the system also possesses the first integral $p_1(\bx)p_2(\bx)^3p_3(\bx)^{-1}$. The system is discretised using Ralston's method with a time step of $h=0.001$ and the phase portrait on the square $[-10, 10]^2$ is presented in figure \ref{fig:phaselinesralston}. Here, the level sets $p_i(\bx)=0$ for $i=1,2,3$ are represented by blue dashed lines. We see that numerical solutions starting on one of these zero level sets remain on the level set. This is exemplified in figure \ref{fig:dperrorralston} which shows the errors $p_i(\bx_n)-p_i(\bx_0)$ for the numerical solutions starting from $p_i(\bx_0)=0$ and for $i=1,2,3$. Here, we see that the errors are all within machine precision, implying that these three second integrals behave like first integrals on their zero level sets as the exact solution dictates.
\end{example}

Second integrals are important as they divide phase space into sections with qualitatively different behavior. We see that Ralston's method has preserved the Darboux polynomials $p_i(\bx)$, meaning that it produces a qualitatively similar phase portrait. This fact is due to the following theorem. 

\begin{theorem}\label{thm: lin RK cofactors}
	If an autonomous ODE $\dot{\bx}=\bff(\bx)$ possesses an affine second integral $p(\bx) = \bp^T\bx$ for $\bp\in\R^{n}$ with cofactor $c(\bx)$ satisfying $\bp^T\bff=c(\bx)\bp^T\bx$ then a Runge-Kutta map $\varphi_h$ of the ODE possesses the discrete second integral $\bp^T\bx$ that satisfies $\bp^T\varphi_h(\bx) = \tilde{c}(\bx)\,\bp^T\bx$ where the discrete cofactor is given by
	\begin{equation}
	\tilde{c}(\bx) = 1+h\bb^TD_c\,(I-h\mcA D_c)^{-1}\dsone,
	\end{equation}
	where $D_c:=\diag([c(\bg_1),...,c(\bg_s)])\in\R^{s\times s}$ and $\dsone\in\R^n$ is the vector of ones.
\end{theorem}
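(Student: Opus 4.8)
The plan is to proceed exactly as in the proof of Theorem \ref{thm:noquad}, but the affine structure of $p$ will make the computation close without any obstruction. First I would apply $\bp^T$ directly to the Runge-Kutta update \eqref{rk1}, giving
\[
\bp^T\varphi_h(\bx) = \bp^T\bx + h\sum_{j=1}^s b_j\,\bp^T\bff(\bg_j).
\]
The crucial step is to invoke the second integral property $\bp^T\bff(\bx)=c(\bx)\bp^T\bx$ at each stage value, so that $\bp^T\bff(\bg_j)=c(\bg_j)p(\bg_j)$ where $p(\bg_j)=\bp^T\bg_j$. This reduces the update to
\[
\bp^T\varphi_h(\bx) = p(\bx) + h\sum_{j=1}^s b_j\,c(\bg_j)\,p(\bg_j),
\]
which---and this is the key difference from the quadratic case---contains no quadratic-in-$\bff$ remainder to obstruct divisibility by $p(\bx)$.

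Next I would obtain a closed linear system for the stage quantities $p(\bg_i)$. Applying $\bp^T$ to the stage equations \eqref{rk0} and again using the second integral property yields
\[
p(\bg_i) = p(\bx) + h\sum_{j=1}^s a_{ij}\,c(\bg_j)\,p(\bg_j).
\]
Collecting these into the vector $\mathbf{P}=(p(\bg_1),\dots,p(\bg_s))^T$ and writing $D_c=\diag([c(\bg_1),\dots,c(\bg_s)])$, this reads $\mathbf{P}=p(\bx)\dsone+h\mcA D_c\mathbf{P}$, so that $\mathbf{P}=(I-h\mcA D_c)^{-1}\dsone\,p(\bx)$. Substituting this back into $\bp^T\varphi_h(\bx)=p(\bx)+h\bb^T D_c\mathbf{P}$ gives
\[
\bp^T\varphi_h(\bx) = \left(1 + h\bb^T D_c\,(I-h\mcA D_c)^{-1}\dsone\right)p(\bx),
\]
which exhibits the claimed discrete cofactor $\tilde{c}(\bx)$ and shows it is independent of $\bp$, hence independent of the choice of basis for $p$, establishing preservation.

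The only point requiring care is the invertibility of $I-h\mcA D_c$. For explicit methods $\mcA$ is strictly lower triangular, so this matrix is unit lower triangular and invertible for every $h$. For implicit methods it reduces to the identity at $h=0$ and is therefore invertible for all sufficiently small $h$---precisely the regime in which the stage values $\bg_j$, and hence the numbers $c(\bg_j)$, are themselves well-defined. I expect this invertibility check to be the main (though minor) obstacle; the remainder is a direct linear computation whose clean closure contrasts sharply with the quadratic case, where the analogous manipulation left irreducible $O(h^2)$ terms that could not be divided out.
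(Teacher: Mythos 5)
Your proposal is correct and follows essentially the same route as the paper: the paper performs the identical computation in matrix form, writing $\mathbf{P}=G\bp$ with $F\bp=D_cG\bp$ and solving $G\bp=(I-h\mcA D_c)^{-1}\dsone\,\bp^T\bx$ before substituting into the update. Your additional remark on the invertibility of $I-h\mcA D_c$ is a reasonable point of care that the paper leaves implicit.
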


\begin{proof}
	Let $\varphi_h$ denote the Runge-Kutta map defined by equations \eqref{butchertable}, \eqref{rk0} and \eqref{rk1}. Now let $G := (\bg_1,...,\bg_s)^T $ and $F := (\bff(\bg_i),...,\bff(\bg_s))^T $ denote the $s\times n$ matrices whose $i$'th rows are $\bg_i^T$ and $\bff_i^T$, respectively. Then
	\begin{equation}
	p(\varphi_h(\bx)) = \bp^T\bx + h \sum_{j=1}^{s} b_{j}\bp^T\bbf(\bg_j) 
	= \bp^T\bx + h \bb^TF\bp 
	= \bp^T\bx + h \bb^TD_cG\bp\label{proof1}.
	\end{equation}
	We have for $G\bp$ the following
	\begin{equation}
	G\bp = \dsone_s\bp^T\bx + h\mcA F \bp 
	= \left(I - h \, \mcA D_c\right)^{-1}\dsone_s\bp^T\bx \label{proof2}
	\end{equation}
	due to the fact that $F\bp = D_cG\bp$. Inserting \eqref{proof2} into \eqref{proof1} and dividing by $\bp^T\bx$ we arrive at the desired result
	\begin{equation}
	\frac{\bp^T\varphi_h(\bx)}{\bp^T\bx} = \tilde{c} = 1+h\bb^TD_c\,(I-h\mcA D_c)^{-1}\dsone_s.
	\end{equation}
\end{proof}

This is a generalisation of theorem 1 in \cite{celledoni2019using}, which shows that Kahan's method preserves all affine Darboux polynomials for quadratic vector fields. The discrete cofactor $\tilde{c}(\bx)$ of theorem \ref{thm: lin RK cofactors} depends only on the Butcher table coefficients, the vector field $\bff(\bx)$ and the (continuous) cofactor $c(\bx)$. Furthermore, $\tilde{c}(\bx)$ is in general rational and implicitly defined due the dependence of $D_c$ on the stage values $\bg_i$. However, when $\bff(\bx)$ is polynomial then explicit Runge-Kutta maps applied to $\bff(\bx)$ yield polynomial maps. One would therefore expect $\tilde{c}$ to be known explicitly and be polynomial. \\

\begin{remark}\label{rem:expl cof}
	For all explicit Runge-Kutta methods applied to polynomial ODEs then $\tilde{c}(\bx)$ is polynomial and can be written explicitly. This can be shown by observing that the matrix $I - h \mcA D_c   = I + L$ where $L:=- h\mcA D_c $ is strictly lower triangular and therefore $(I + L)^{-1} = I + \tilde{L}$ where $\tilde{L}$ is also strictly lower triangular. Moreover, as $\det(I+L)=1$, its inverse is equal to its adjugate, that is $(I+L)^{-1}=\mathrm{adj}(I+L) = I + \tilde{L}$ and therefore $\tilde{L}$ is polynomial in the components of $L$. As the cofactor $c(\bx)$ is polynomial for polynomial ODEs, it follows that $\tilde{c}(\bx)$ is polynomial.\\
\end{remark} 

An interesting implication of theorem \ref{thm: lin RK cofactors} is that if an ODE possesses two linear second integrals with the same cofactor, then so does $\varphi_h$. This leads to the following corollary about the preservation of rational integrals.
\begin{corollary}\label{cor:rational integral}
	All Runge-Kutta methods preserve all rational first integrals of the form $H(\bx)=Q(\bx)/R(\bx)$ for $Q(\bx)$ and $R(\bx)$ affine. 
\end{corollary}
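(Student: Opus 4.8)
The plan is to peel the corollary back onto Theorem~\ref{thm: lin RK cofactors} by recognising that a rational first integral with affine numerator and denominator is nothing but a pair of affine second integrals carrying a common cofactor. Write $H=Q/R$ with $Q(\bx)=\bq^T\bx+q_0$ and $R(\bx)=\br^T\bx+r_0$, and assume $H$ is non-constant (otherwise there is nothing to show), so that $\bq$ and $\br$ are not proportional. Since $H$ is a first integral, $\dot H=0$ along the flow, and clearing the denominator gives the identity
\begin{equation}
\dot Q(\bx)\,R(\bx)=Q(\bx)\,\dot R(\bx),\qquad \dot Q=\bff^T\bq,\quad \dot R=\bff^T\br. \label{eq:firstint}
\end{equation}

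The first step is to read the cofactor off \eqref{eq:firstint}. Two non-proportional affine polynomials are coprime in $\mathbb{K}[\bx]$, their only common divisors being constants. Equation \eqref{eq:firstint} shows $R\mid Q\dot R$, so $\gcd(Q,R)=1$ forces $R\mid\dot R$, i.e.\ $\dot R=c(\bx)R$ for some $c$; the symmetric argument gives $\dot Q=c'(\bx)Q$. Substituting both back into \eqref{eq:firstint} yields $c'QR=cQR$, hence $c=c'$, so $Q$ and $R$ are affine second integrals of the ODE sharing one and the same cofactor $c(\bx)$. (In the smooth rather than polynomial setting the identical conclusion follows because $\dot R$ vanishes on the hyperplane $\{R=0\}$, on which $\nabla R=\br\neq 0$, so $\dot R=cR$ by Hadamard's lemma.)

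The second step is to apply Theorem~\ref{thm: lin RK cofactors} to each of $Q$ and $R$. Its proof adapts to an affine second integral $p(\bx)=\bp^T\bx+r$ with nonzero constant by repeating the computation with the vector $(p(\bg_1),\dots,p(\bg_s))^T$ in place of $G\bp$: the constant is absorbed into $p(\bx)$ because only $\nabla p=\bp$ and the relation $\bff^T\bp=c\,p$ enter, neither of which sees $r$, and the discrete cofactor
\begin{equation}
\tilde c(\bx)=1+h\,\bb^T D_c\,(I-h\mcA D_c)^{-1}\dsone
\end{equation}
comes out unchanged. Crucially $\tilde c$ depends on $p$ only through its continuous cofactor $c$ and on the Butcher data and stage values $\bg_i$, which are fixed by the ODE and the initial point and are indifferent to whether we are tracking $Q$ or $R$. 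Since $Q$ and $R$ share $c$, they share $\tilde c$, so $Q(\varphi_h(\bx))=\tilde c(\bx)\,Q(\bx)$ and $R(\varphi_h(\bx))=\tilde c(\bx)\,R(\bx)$; dividing, the common factor cancels and
\begin{equation}
H(\varphi_h(\bx))=\frac{\tilde c(\bx)\,Q(\bx)}{\tilde c(\bx)\,R(\bx)}=\frac{Q(\bx)}{R(\bx)}=H(\bx),
\end{equation}
which is the claimed exact preservation by every Runge-Kutta map.

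I expect the only genuine obstacle to be the divisibility step that converts the first-integral identity \eqref{eq:firstint} into two cofactor relations with a \emph{matched} cofactor; everything after the common $c$ has been identified is an immediate invocation of Theorem~\ref{thm: lin RK cofactors} followed by the cancellation of $\tilde c$. The only side issue to dispatch cleanly is the constant terms $q_0,r_0$, handled either by the extension of Theorem~\ref{thm: lin RK cofactors} indicated above or by the paper's convention that such constants may be taken to be zero.
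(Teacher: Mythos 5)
Your proof is correct and lands on the same pivotal intermediate fact as the paper --- that $Q$ and $R$ are affine second integrals sharing a single cofactor, after which Theorem~\ref{thm: lin RK cofactors} and cancellation of the common discrete cofactor finish the argument --- but you establish that fact by a genuinely different route. The paper first writes the vector field as $\dot{\bx}=S(\bx)\nabla(Q/R)$ with $S$ skew-symmetric and then computes the cofactor explicitly as $\bq^TS(\bx)\br/R(\bx)^2$, skew-symmetry killing the diagonal terms. You instead start from the cleared first-integral identity $\dot Q\,R=Q\,\dot R$ and extract the two cofactor relations by coprimality of non-proportional affine polynomials (Euclid's lemma in the polynomial case, Hadamard's lemma in the smooth case), the matching $c=c'$ then falling out of the same identity. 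Your route is more self-contained --- it does not rely on the skew-gradient representation of a vector field with a prescribed first integral, which the paper asserts without proof --- and in the polynomial case it delivers a polynomial cofactor directly, whereas the paper's explicit cofactor carries an $R^2$ denominator that it must later remove by rescaling the vector field. What the paper's computation buys in exchange is a closed-form cofactor that is reused in the surrounding examples. Two small points: non-constancy of $H$ gives that $Q$ and $R$ are non-proportional \emph{as affine polynomials}, not that the gradient vectors $\bq$ and $\br$ are non-proportional (e.g.\ $Q=x+1$, $R=x$), but the coprimality you actually need still holds, so this is only a phrasing slip; and your check that Theorem~\ref{thm: lin RK cofactors} survives nonzero constant terms is sound and slightly more careful than the paper's blanket convention of dropping them.
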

\begin{proof}
	Any ODE with a first integral $H(\bx)=Q(\bx)/R(\bx)$ can be written as the following system
	\begin{equation}\label{rat int ode}
	\dot{\bx}=\bff(\bx)=S(\bx)\nabla\left(\frac{Q(\bx)}{R(\bx)}\right)
	\end{equation}
	for some skew-symmetric matrix $S(\bx)=-S(\bx)^T$. Without loss of generality we can let $Q(\bx) = \bq^T\bx$ and $R(\bx) = \br^T\bx$ for constant vectors $\bq,\br\in\R^n$. We now show that these two functions are second integrals of the ODE with the same cofactor by computing their time derivatives
	\begin{equation}
	\frac{\dd}{\dd t} Q(\bx) = \bq^T\bff = \frac{\bq^T S(\bx)\left(\bq^T\bx\,\br - \br^T\bx \,\bq \right)}{{R(\bx)^2}} = \frac{\bq^T S(\bx)\br }{{R(\bx)^2}} Q(\bx)
	\end{equation}	
	due to skew-symmetry of $S(\bx)$. Similarly, 
	\begin{equation}
	\frac{\dd}{\dd t} R(\bx) = \frac{\bq^T S(\bx)\br }{{R(\bx)^2}} R(\bx),
	\end{equation}	
	that is, $Q(\bx)$ and $R(\bx)$ are second integrals with cofactor $\frac{\bq^T S(\bx)\br }{{R(\bx)^2}}$. Due to theorem \ref{thm: lin RK cofactors}, all Runge-Kutta methods preserve these second integrals and they both correspond to the same discrete cofactor. Therefore their quotient is an integral of the Runge-Kutta map. 
\end{proof}
Note that corollary \ref{cor:rational integral} applies to general ODEs. The same statement can me made for polynomial ODEs by scaling \eqref{rat int ode} by $R(x)^a$, $a\ge2$ and $a\in\mathbb{N}$. We now give two examples of explicit Runge-Kutta methods preserving a rational integral. The first is of a simple Lotka-Volterra system, the second is of a non-polynomial vector field.\\

\begin{example}[A Lotka-Volterra system with a rational integral]\label{eg: LV}
	Consider the following 2D Lotka-Volterra system
	\begin{equation}
	\dot{x}=x(x-y),\quad \dot{y}=y(x-y).
	\end{equation}
	Here $x$ and $y$ are clearly Darboux polynomials with cofactor $c = x-y$ implying that $H(\bx)=\frac{x}{y}$ is a first integral of the ODE. Now consider the generic second order explicit Runge-Kutta map $\varphi_h$ defined by the Butcher Tableau 
	\begin{equation}
	\begin{array}{c|cc}
	0 & 0 & 0\\
	\theta & \theta & 0\\
	\hline
	~& 1-\frac{1}{2\theta} & \frac{1}{2\theta} \\
	\end{array}
	\end{equation}
	Note that setting $\theta=1/2,2/3,1$ yields the explicit midpoint, Ralston's and Heun's methods, respectively. Then according to theorem \ref{thm: lin RK cofactors} the discrete cofactor is 
	\begin{equation}
	\tilde{c}(\bx) = 1+{{ \left( \,x_{{1}}
			-\,x_{{2}} \right) h}}+
	\left( x_{{1}}-x_{{2}} \right) ^{2}{h}^{2}+{\frac{\theta}{2}\, \left( x_{{1}}-x_{{2}} \right) ^{3}{h}^{3}}
	\end{equation}
	and is polynomial due to remark \ref{rem:expl cof}.	Indeed, we can show that the Runge-Kutta map satisfies 
	\begin{equation}
	\varphi_h(\bx) = \tilde{c}(\bx)\bx
	\end{equation}
	meaning that $x$ and $y$ are discrete Darboux polynomials of $\varphi_h$ with cofactor $\tilde{c}$. This implies that $H(\bx)=\frac{x}{y}$ is a first integral of $\varphi_h$. 
\end{example}

\begin{example}[A non-polynomial ODE with a rational integral]
	Consider the following ODE
	\begin{equation}
		\dot{x} = \frac{2\,x+\alpha} {\left( x-y \right) ^{{\frac{3}{2}}}},\quad \dot{y} = \frac{2\,y+\alpha} {\left( x-y \right) ^{{\frac{3}{2}}}}
	\end{equation}
	which has the following two second integrals
	\begin{equation}
		p_1(\bx) = x+y+\alpha, \quad p_2(\bx) = x-y
	\end{equation}
	that both correspond to the cofactor 
	\begin{equation}
		c(\bx) = \frac{2}{\left( x-y \right) ^{3/2}}.
	\end{equation}
	This means that $H(\bx)=(x+y+\alpha)/(x-y)$ is a first integral of the ODE. Now apply to this ODE the generic second-order explicit Runge-Kutta map $\varphi_h$ from example \ref{eg: LV}. Then $\varphi_h$ possesses the discrete second integrals $p_1(\bx)$ and $p_2(\bx)$ with cofactor 
		\begin{equation}
			\tilde{c}(\bx) =\frac{ \left( h\sqrt {x-y}+a(\bx) \left(  \left(  \left( x-y \right) ^{{\frac{3}{2}}}+2\,h
				\right) \theta-h \right)  \right)}{a(\bx) \theta \left( x-y \right) ^{{\frac{3}{2}}}}
		\end{equation}
		where 
		\begin{equation}
			a(\bx) = \sqrt {{ \left(  \left( x-y \right) ^{{\frac{1}{2}}}+2\,h\theta \left( x-y \right) ^{-{\frac{1}{2}}}
					\right)}},
		\end{equation}
		in agreement with theorem \ref{thm: lin RK cofactors}, hence $H(\bx)$ is an integral of $\varphi_h$. We can also verify by direct computation that $\varphi_h$ satisfies $H(\varphi_h(\bx))=H(\bx)$. 
\end{example}

If an ODE possesses a rational integral of the form prescribed by corollary \ref{cor:rational integral}, then it is not necessarily a straightforward task to determine its form without knowledge of the cofactor. However, due to the following theorem, one can immediately determine the discrete cofactor corresponding to the numerator and denominator of the rational integral by computing the Jacobian determinant of a Runge-Kutta map applied to the ODE. From this one can infer the (continuous) cofactor due to the fact that $\lim\limits_{h\rightarrow0}((\tilde{c}(\bx)-1)/h)=c(\bx)$

\begin{theorem}\label{thm:det}
	If a Runge-Kutta map $\varphi_h$ possesses two or more affine second integrals with the same cofactor $\tilde{c}(\bx)$, then $\tilde{c}(\bx)$ divides the Jacobian determinant of the map $ \det\left(\frac{\partial \varphi_h(\bx)}{\partial \bx}\right)$. 
\end{theorem}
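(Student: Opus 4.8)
The plan is to differentiate the second-integral relations and then exploit multilinearity of the determinant in two of its rows. Write the two affine second integrals as $p_1(\bx)=\bp_1^T\bx$ and $p_2(\bx)=\bp_2^T\bx$; since two distinct affine second integrals sharing a cofactor must be linearly independent (otherwise they coincide up to scaling), I may assume $\bp_1,\bp_2$ are linearly independent. By hypothesis they satisfy $\bp_i^T\varphi_h(\bx)=\tilde{c}(\bx)\,\bp_i^T\bx$ for $i=1,2$. Differentiating each relation with respect to $\bx$ gives
\begin{equation}
\bp_i^T\frac{\partial\varphi_h(\bx)}{\partial\bx}=(\bp_i^T\bx)\,(\nabla\tilde{c}(\bx))^T+\tilde{c}(\bx)\,\bp_i^T,\qquad i=1,2,
\end{equation}
so that the two particular combinations $\bp_i^T\,\partial\varphi_h/\partial\bx$ of the Jacobian rows are each the sum of a multiple of $(\nabla\tilde{c})^T$ and a multiple of $\tilde{c}$.

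To turn these combinations into genuine rows of a matrix whose determinant agrees with $\det(\partial\varphi_h/\partial\bx)$ up to a nonzero constant, I would complete $\bp_1,\bp_2$ to a basis, i.e. choose a constant invertible matrix $P\in\R^{n\times n}$ whose first two rows are $\bp_1^T$ and $\bp_2^T$. Then $\det(P)\,\det(\partial\varphi_h/\partial\bx)=\det\big(P\,\partial\varphi_h/\partial\bx\big)$, the first two rows of $P\,\partial\varphi_h/\partial\bx$ are exactly the expressions above, and the remaining rows are left arbitrary. Since $\det(P)$ is a nonzero scalar, it suffices to show that $\tilde{c}$ divides $\det(P\,\partial\varphi_h/\partial\bx)$.

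The key step is then to expand this determinant by multilinearity in its first two rows, each of which splits as $(\bp_i^T\bx)(\nabla\tilde{c})^T+\tilde{c}\,\bp_i^T$. This produces four determinants. In the term in which both of these rows contribute their $(\nabla\tilde{c})^T$ part, the two rows are $(\bp_1^T\bx)(\nabla\tilde{c})^T$ and $(\bp_2^T\bx)(\nabla\tilde{c})^T$, which are proportional, so that determinant vanishes. Each of the three remaining terms contains at least one row of the form $\tilde{c}\,\bp_i^T$, from which a factor of $\tilde{c}$ can be pulled out (indeed a factor $\tilde{c}^2$ from the all-$\tilde{c}$ term and a single $\tilde{c}$ from each mixed term). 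Hence $\det(P\,\partial\varphi_h/\partial\bx)=\tilde{c}\cdot(\text{a polynomial or rational expression})$, and therefore $\tilde{c}\mid\det(\partial\varphi_h/\partial\bx)$, as claimed.

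I expect the delicate points to be organisational rather than computational. First, the reduction via $P$ must be justified: it rests on the invariance of the Jacobian determinant under multiplication by a constant matrix, together with the harmless linear-independence assumption. Second, the statement ``$\tilde{c}$ divides $\det(\partial\varphi_h/\partial\bx)$'' should be read in the appropriate ring --- the polynomial ring for explicit Runge-Kutta maps applied to polynomial vector fields (by remark \ref{rem:expl cof}), and the field of rational functions in general --- but the multilinear expansion is valid over any commutative ring, so the argument is insensitive to this distinction. The crux that makes everything work is the cancellation of the ``double-gradient'' term, and this is precisely where the hypothesis of \emph{two} distinct second integrals with a common cofactor is essential: a single affine second integral would leave only one special row and would not force the determinant to be divisible by $\tilde{c}$.
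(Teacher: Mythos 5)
Your proof is correct and follows essentially the same route as the paper's: both normalise so that the two integrals supply two distinguished rows of the Jacobian, write each such row as $(\bp_i^T\bx)(\nabla\tilde{c})^T+\tilde{c}\,\bp_i^T$, and eliminate the gradient contribution to extract the factor $\tilde{c}$. Your multilinear expansion in the two special rows is a clean substitute for the paper's row and column operations, and your remarks on linear independence and on the ring in which divisibility is asserted are welcome precisions rather than deviations.
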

\begin{proof}
	Without loss of generality we can assume the two affine second integrals are $p_1(\bx)=x_1$ and $p_2(\bx)=x_2$ that both correspond to the cofactor $\tilde{c}(\bx)$. Letting $\varphi_h:(x_1,...,x_n) \mapsto \varphi_h(x_1,...,x_n):=(x_1',...,x_n')$ then the map can be written as 
	\begin{equation}
		\varphi_h\left(\begin{array}{c}
		x_1\\
		x_2\\
		x_3\\
		\vdots\\
		x_n
		\end{array}\right)
		 = 
		\left(\begin{array}{c}
		x_1\tilde{c}(\bx)\\
		x_2\tilde{c}(\bx)\\
		x_3'\\
		\vdots\\
		x_n'
		\end{array}\right).
	\end{equation}
	Taking the Jacobian of $\varphi_h(\bx)$ with respect to $\bx$ gives
	\begin{equation}
{	\renewcommand{\arraystretch}{1.5}
		J_{\varphi_h} := \frac{\partial \varphi_h(\bx)}{\partial \bx} = 
		{\left(\begin{array}{cccc} 
		x_1\frac{\p \tilde{c}}{\p x_1} & \dots & x_1\frac{\p \tilde{c}}{\p x_n} \\
		x_2\frac{\p \tilde{c}}{\p x_1} & \dots & x_2\frac{\p \tilde{c}}{\p x_n} \\
		\frac{\p x'_3}{\p x_1} & \dots & \frac{\p x'_3}{\p x_n}\\
		\vdots && \vdots\\
		\frac{\p x'_n}{\p x_1} & \dots & \frac{\p x'_n}{\p x_n}\\
		\end{array}\right)} + 
		\left(\begin{array}{ccccc}
		\tilde{c}(\bx) & 0 & 0 & \dots &0 \\
		0 &\tilde{c}(\bx) & 0 &   & \\
		0 & 0 & 0 &  & \\	
		\vdots&  &  &\ddots  & \vdots\\	
		0 & &  & \dots & 0 \\	
		\end{array} \right)}.
	\end{equation}
	As we are taking the determinant of $J_{\varphi_h}$ we can perform elementary row and column operations to manipulate this matrix. Denoting by $\br_i$ and $\bc_i$ the $i$th row and column of $J_{\varphi_h}$ then letting $\bc_1 \rightarrow \sum \bc_i $, then $\br_2\rightarrow \br_2 - \br_1$, we get for row two $\br_2 = (0,\tilde{c}(\bx),0,...,0)$. Then by Laplace expansion across this row, we see that $\det(J_{\varphi_h})=\tilde{c}(\bx)\sum M_{2,j}$, where $M_{2,j}$ are the determinantal  minors across the second row. 
\end{proof}

We also remark that theorem \ref{thm:det} implies that if a Runge-Kutta map has $m$ second integrals corresponding to the cofactor $\tilde{c}(\bx)$, then $\tilde{c}(\bx)^{m-1}$ will divide the Jacobian. Furthermore, for the proof to hold, we only need the map to be locally invertible, such that $\det(J_{\varphi_h})\ne 0$ and therefore should hold for other affinely equivalent maps that possess a rational integral. \\

Using theorem \ref{thm:det} one can easily detect the existence of a rational integral of the form given in corollary \ref{cor:rational integral} by factoring the Jacobian determinant of any Runge-Kutta map applied to an ODE. This immediately provides the form of the cofactor, if it exists as given by the following example.  
\begin{example}[A rational ODE with an unknown rational integral]
	Consider the following ODE 
	\begin{align}\label{ratode}
		\dot{x} =& x-y+z,\\
		\dot{y} =& {\frac {2\,{y}^{2}-xz-{z}^{2}}{y+z}},\\
		\dot{z} =& {\frac {{z}^{2}+z \left( x+y \right) -{y}^{2}}{y+z}}.
	\end{align}
	It is not immediately obvious if this ODE possesses a rational integral. However, we apply to this ODE the forward Euler method, denoted by $\varphi_h(\bx)$, and take its Jacobian determinant. This yields 
	\begin{equation}
		\det\left(J_{\varphi_h}\right) = \frac{K_1K_2}{D^2}
	\end{equation}
	where 
	\begin{align}
		K_1 = & yh+y+z,\\
		K_2 = &   \left( x+2\,y+z \right) {h}^{2}+ \left( x+4\,y+3\,z \right) h+y+z, \\
		D =& y+z.
	\end{align}
	Using $ K_1/D$ as a discrete cofactor and setting  $p(\bx)= \alpha_1 x + \alpha_2 y + \alpha_3 z + \alpha_4$ we solve
	\begin{equation}
		p(\varphi_h(\bx)) = \frac{K_1}{D}p(\bx)
	\end{equation}
	for $\alpha_i$, which gives the following two discrete second integral solutions
	\begin{equation}
		p_1(\bx) = y+z,\quad\text{and}\quad p_2(\bx) = x+y.
	\end{equation}
	This implies that $H(\bx)=(y+z)/(x+y)$ is a first integral of $\varphi_h(\bx)$ and therefore also of the original ODE. Computing
	\begin{equation}
		c(\bx) = \lim\limits_{h\rightarrow0}\left(\frac{\tilde{c}(\bx)-1}{h}\right) = \frac{y}{y+z},
	\end{equation}
	yields the form of the rational (continuous) cofactor. 
\end{example}
We remark that the above method for finding rational affine integrals involves only solving linear systems and hence is fast and scalable to ODE in higher dimensions with free parameters. 

\subsection{Constant cofactor case}

We now restrict our discussion to the special case of affine Darboux polynomials with constant cofactor
\begin{equation}\label{lindp}
\dot{p}(\bx) = \lambda p(\bx),
\end{equation}
where $\lambda$ is constant. If such a Darboux polynomial exists then we can solve for $p(\bx)$ 
\begin{equation}
p(\bx)=Ke^{\lambda t},
\end{equation} 
for some arbitrary constant $K$, and therefore
\begin{equation}
H=p(\bx)e^{-\lambda t}
\end{equation} 
is a time-dependent integral of the ODE \eqref{ODE1}. This has a natural discrete-time analogue as follows. 

\begin{proposition}
	Consider a map $\varphi_h$ with a discrete Darboux polynomial satisfying
	$$p(\varphi_h(\bx))=\tilde{c}p(\bx),$$ where $\tilde{c}$ is constant. This implies that
	\begin{equation}
		p(\bx)=K \tilde{c}^k,
	\end{equation} 
	where $k$ is the iteration index and $K$ is an arbitrary constant, moreover the function 
	\begin{equation}
		\tilde{H} = \tilde{c}^{-k}p(\bx),
	\end{equation}
	is an integral of the map $\varphi_h^{\circ k}$.
\end{proposition}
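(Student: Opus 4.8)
The plan is to mirror, in discrete time, the continuous argument that immediately precedes the statement. First I would fix an initial point $\bx_0$ and write $\bx_k := \varphi_h^{\circ k}(\bx_0)$ for the $k$th iterate, so that $\bx_{k+1} = \varphi_h(\bx_k)$. The hypothesis $p(\varphi_h(\bx)) = \tilde{c}\,p(\bx)$ with constant $\tilde{c}$ then furnishes a one-step recurrence $p(\bx_{k+1}) = \tilde{c}\,p(\bx_k)$ along the orbit. A routine induction on $k$ collapses this to $p(\bx_k) = \tilde{c}^k\, p(\bx_0)$; setting the arbitrary constant $K := p(\bx_0)$ yields the first claim $p(\bx) = K\tilde{c}^k$, where the left-hand side is to be understood as $p$ evaluated along the orbit at the $k$th step, exactly as in the continuous formula $p(\bx)=Ke^{\lambda t}$.

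For the second claim I would verify directly that $\tilde{H} = \tilde{c}^{-k}p(\bx)$ is invariant under advancing the iteration index together with the map. Writing $\tilde{H}_k := \tilde{c}^{-k} p(\bx_k)$ and applying the recurrence once, the verification amounts to the single line
\begin{equation}
\tilde{H}_{k+1} = \tilde{c}^{-(k+1)} p(\bx_{k+1}) = \tilde{c}^{-(k+1)} \tilde{c}\, p(\bx_k) = \tilde{c}^{-k} p(\bx_k) = \tilde{H}_k,
\end{equation}
so $\tilde{H}_k$ is constant in $k$ and equals $p(\bx_0) = K$. This is precisely the discrete analogue of the continuous computation $\frac{\dd}{\dd t}(p\, e^{-\lambda t}) = (\dot p - \lambda p)e^{-\lambda t} = 0$, with the factor $\tilde{c}^{-k}$ playing the role of the integrating factor $e^{-\lambda t}$.

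The only genuine subtlety, rather than an obstacle, is pinning down the sense in which $\tilde{H}$ is an integral: because it carries the explicit iteration index $k$, it is a time-dependent integral rather than a first integral of $\varphi_h$ in the autonomous sense. The clean way to phrase this is to pass to the augmented autonomous map $(\bx, k) \mapsto (\varphi_h(\bx), k+1)$; the computation above shows exactly that $\tilde{H}(\bx, k) = \tilde{c}^{-k}p(\bx)$ is preserved by this extended map, which is what it means for $\tilde{H}$ to be an integral of the iteration $\varphi_h^{\circ k}$. I would make this definitional point explicit so that the statement is unambiguous; with that in hand, the proof reduces to the two-line induction together with the one-line invariance check above.
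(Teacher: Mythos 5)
Your proposal is correct and is precisely the intended argument: the paper states this proposition without proof, treating it as the immediate discrete analogue of the preceding continuous computation, and your two-line induction $p(\bx_k)=\tilde{c}^k p(\bx_0)$ together with the one-line check that $\tilde{c}^{-k}p(\bx_k)$ is constant in $k$ is exactly that argument. Your clarifying remark that $\tilde{H}$ is an iteration-index-dependent integral (i.e., an invariant of the augmented map $(\bx,k)\mapsto(\varphi_h(\bx),k+1)$) matches how the paper uses the result later.
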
  
We now consider the case where the map $\varphi_h$ comes about as a Runge-Kutta method applied to an ODE that possesses one or more Darboux polynomials of the form \eqref{lindp}. We begin with an example of a Lotka-Volterra system with a time-dependent integral and its discretisation under an arbitrary Runge-Kutta method.

\begin{example}[A Lotka-Volterra system with one time-dependent integral]
	Consider the following Lotka-Volterra system
	\begin{equation}\label{LV}
	\frac{\mathrm{d}}{\mathrm{d}t}\left(\begin{array}{c}
	x_1\\x_2\\x_3\\
	\end{array}\right)
	=
	\left(\begin{array}{c}
	x_{{1}} \left( a_{{1}}x_{{2}}-a_{{2}}x_{{3}}+b \right) \\
	x_{{2}} \left( -a_{{1}}x_{{1}}+a_{{3}}x_{{3}}+b \right) \\
	x_{{3}} \left( a_{{2}}x_{{1}}-a_{{3}}x_{{2}}+b \right)\\
	\end{array}\right).
	\end{equation}
	Then $p_{1}(x) = x_1+x_2+x_3$ is a Darboux polynomial corresponding to the cofactor $c_1=b$, hence 
	\begin{equation}
	H=p_{1}(x)e^{-bt}
	\end{equation}
	is a time-dependent integral of the ODE \eqref{LV}. Now consider discretisation of the above ODE by a the forward Euler method $\varphi_h$. Then, $p_{1}(x)$ is a discrete Darboux polynomial of $\varphi_h$ with the cofactor $\tilde{c}_1=1+bh$, hence
	\begin{equation}
	\tilde{H} = \tilde{c}_1^k p_{1}(x),
	\end{equation}
	where $k$ is the iteration index, is an integral of the map $\varphi_h^{\circ k}$. 
\end{example}

Note that $\tilde{c}_1$ in the above example also corresponds to the stability function $R(bh)$ of the forward Euler method. This is no coincidence and is due to the following corollary of theorem \ref{thm: lin RK cofactors}. 

\begin{corollary}\label{thm:stabfun1}
	If an autonomous polynomial ODE $\dot{\bx}=\bff(\bx)$ possesses an affine Darboux polynomial $p(\bx)$ with a constant cofactor $\lambda$, then a Runge-Kutta map $\varphi_h$ with stability function $R(z)$ satisfies
	\begin{equation}
	p(\varphi_h(\bx))=R(\lambda h)p(\bx),
	\end{equation}
	where 
	\begin{equation}\label{stabfun}
	R(z) = 1 + z\bb^T(I-z\mcA)^{-1}\mathbb{1}
	\end{equation}
	is the (constant) discrete cofactor, $\mathbb{1}$ is the ones vector and $I$ is the $s\times s$ identity matrix.
\end{corollary}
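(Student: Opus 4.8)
The plan is to derive this as an immediate specialization of Theorem \ref{thm: lin RK cofactors}. That theorem already establishes that $\varphi_h$ preserves the affine second integral $p(\bx)=\bp^T\bx$ with discrete cofactor
\begin{equation}
\tilde{c}(\bx) = 1+h\bb^T D_c\,(I-h\mcA D_c)^{-1}\mathbb{1},
\end{equation}
where $D_c=\diag([c(\bg_1),\dots,c(\bg_s)])$. The entire content of the corollary is to evaluate this expression in the case where the continuous cofactor $c(\bx)\equiv\lambda$ is constant, and then to recognize the result as the stability function of the method.

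First I would observe that constant cofactor means $c(\bg_i)=\lambda$ for every stage value $\bg_i$, independently of $\bx$. Hence the diagonal matrix collapses to a scalar multiple of the identity, $D_c=\lambda I$. Substituting this into the formula above and pulling the scalar $\lambda$ out of the matrix products gives
\begin{equation}
\tilde{c}(\bx)=1+h\lambda\,\bb^T(I-h\lambda\,\mcA)^{-1}\mathbb{1}.
\end{equation}
In particular $\tilde{c}$ is now genuinely constant (the dependence on $\bx$ through the stage values has disappeared), which is consistent with the Proposition preceding the corollary.

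Next I would set $z:=\lambda h$, so that the expression becomes exactly
\begin{equation}
\tilde{c}(\bx)=1+z\,\bb^T(I-z\,\mcA)^{-1}\mathbb{1}=R(z)=R(\lambda h),
\end{equation}
matching the definition \eqref{stabfun} verbatim. To close the argument cleanly I would note that $R(z)=1+z\bb^T(I-z\mcA)^{-1}\mathbb{1}$ is indeed the standard Runge-Kutta stability function: applying the method \eqref{rk0}--\eqref{rk1} to the scalar linear test equation $\dot{x}=\lambda x$ yields stage equations $\bg=\mathbb{1}x+z\mcA\bg$, whence $\bg=(I-z\mcA)^{-1}\mathbb{1}\,x$ and $\varphi_h(x)=x+z\bb^T\bg=R(z)x$. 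This connection is not accidental: the affine Darboux polynomial with constant cofactor behaves exactly like the linear test problem along the direction $\bp$, so its amplification factor must coincide with $R(\lambda h)$.

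There is essentially no obstacle here beyond bookkeeping; the only point requiring a moment of care is confirming that the $s\times s$ matrix identity $(I-h\mcA D_c)^{-1}$ specializes correctly under $D_c=\lambda I$, which is immediate since $\lambda I$ commutes with $\mcA$ and the inverse is well defined for $h$ small enough that $I-z\mcA$ is nonsingular. The substance of the corollary is entirely interpretive, namely identifying the specialized discrete cofactor with the stability function, rather than computational.
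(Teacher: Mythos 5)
Your proposal is correct and follows the same route as the paper, which likewise obtains the result by setting $c(\bx)=\lambda$ (so $D_c=\lambda I$) in Theorem \ref{thm: lin RK cofactors} and identifying the resulting constant with the stability function; your closing verification that $R(z)$ is the amplification factor for the Dahlquist test equation reproduces the computation the paper relegates to its appendix.
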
 
\begin{proof}
	This can be seen by setting $c(\bx) = \lambda$ in theorem \ref{thm: lin RK cofactors}. Alternatively, we note that as Runge-Kutta methods are affinely equivariant \cite{mclachlan1998six}, we can take $p(x) = x_1$ without loss of generality, then equation \eqref{lindp} is identical to Dahlquist's famous test ODE \cite{wanner1996solving} and $R(z)$ is identical to the the stability function in the context of A-stability of one-step methods. The rest follows from e.g., \cite[Proposition 3.1]{wanner1996solving}. See appendix \ref{A} for details.  
\end{proof}

The fact that the cofactor is the stability function is intuitive. Equation \eqref{DP} implies that $p(\bx) = A_0\exp(\int c(\bx) \dd t)$ for some integration constant $A_0$. Letting $c(\bx)=\lambda$  and $p(\bx)=x_1$ implies that $x_1=A_0\exp(\lambda t)$ as expected for Dalquist's linear test equation, while an order-$p$ Runge-Kutta map yields $\varphi_h(x_1) = R(\lambda h) x_1$, where $R(z)\approx \exp(z)$ is an order-$p$ Pad{\'e} approximation to the exponential function. \\

This result implies that Runge-Kutta methods can generally preserve integrals that are the product of Darboux polynomials. This can be seen by considering an ODE that has the first integral $H(\bx)=p_1(\bx)^{\alpha_1}p_2(\bx)^{\alpha_2}$ where the $p_i(\bx)$ are Darboux polynomials with constant cofactors $\lambda_i$. The fact that $H(\bx)$ is a first integral implies that $\alpha_1\lambda_1+\alpha_2\lambda_2=0$. Then $H(\bx)$ is only a first integral of the Runge-Kutta map $\varphi_h$ if and only if $R(\lambda_1)^{\alpha_1} R(\lambda_2)^{\alpha_2} = 1$, which is only true only when $R(z)=\exp(z)$, which is impossible. The exception of course, is when $\alpha_1=-\alpha_2=\pm1$ and $\lambda_1=\lambda_2$. This results in the integral $H(\bx)=p_1(\bx)/p_2(\bx)$ being preserved by all Runge-Kutta maps, as shown in corollary \ref{cor:rational integral}. However, despite this result, there are some instances where Runge-Kutta methods can preserve a modified integral as we will now discuss. \\

The existence of Darboux polynomials with constant cofactors implies the existence of time-dependent integrals of an ODE and therefore iteration-index-dependent integrals of a Runge-Kutta map in the discrete-time case. One can eliminate this time dependence (and iteration index dependence) by taking quotients. This leads to the preservation of some non-rational modified integrals by Runge-Kutta methods by the following corollary. 

 \begin{corollary}\label{cor:nonratintegrals}
 	Given an ODE with a (non-rational) first integral given by
	\begin{equation}
 	H(\bx) = \frac{p_1(\bx)^{\sigma}}{p_2(\bx)},
 	\end{equation}
 	 where $p_1(\bx)$ and $p_2(\bx)$ are affine Darboux polynomials with constant cofactors $c_1$ and $c_2$ and 
 	\begin{equation}
 	\sigma = \frac{c_2}{c_1},
 	\end{equation}
 	then any Runge-Kutta map $\varphi_h$ with stability function $R(z)$ preserves the modified integral
 	\begin{equation}
 	\tilde{H}(\bx) = \frac{p_1(\bx)^{\tilde{\sigma}}}{p_2(\bx)}
 	\end{equation}
 	with the modified exponent
 	\begin{equation}
 	\tilde{\sigma} = \frac{\ln(R(hc_2))}{\ln(R(hc_1))}.
 	\end{equation}
 \end{corollary}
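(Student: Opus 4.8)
The plan is to reduce everything to the two scalar multiplicative relations supplied by Corollary \ref{thm:stabfun1} and then impose directly that $\tilde{H}$ is invariant under $\varphi_h$. Since $p_1$ and $p_2$ are affine Darboux polynomials with the \emph{constant} cofactors $c_1$ and $c_2$, Corollary \ref{thm:stabfun1} gives immediately
\begin{equation}
p_1(\varphi_h(\bx)) = R(hc_1)\,p_1(\bx), \qquad p_2(\varphi_h(\bx)) = R(hc_2)\,p_2(\bx),
\end{equation}
so that under one step of the method each factor is simply rescaled by a constant depending only on its cofactor and the stability function. Note that the hypothesis $\sigma = c_2/c_1$ is never used quantitatively in the discrete argument beyond guaranteeing that $H$ is a genuine continuous first integral (indeed $\tfrac{\dd}{\dd t}\ln H = \sigma c_1 - c_2$, which vanishes precisely when $\sigma = c_2/c_1$); the discrete exponent $\tilde{\sigma}$ will be forced by the discrete rescaling factors alone.

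Next I would substitute these two relations into $\tilde{H} = p_1^{\tilde{\sigma}}/p_2$ and compute the one-step change, which yields
\begin{equation}
\tilde{H}(\varphi_h(\bx)) = \frac{\big(R(hc_1)\,p_1(\bx)\big)^{\tilde{\sigma}}}{R(hc_2)\,p_2(\bx)} = \frac{R(hc_1)^{\tilde{\sigma}}}{R(hc_2)}\,\tilde{H}(\bx),
\end{equation}
so $\tilde{H}$ is preserved precisely when the scalar prefactor equals one, i.e.\ when $R(hc_1)^{\tilde{\sigma}} = R(hc_2)$. Taking logarithms and solving gives $\tilde{\sigma} = \ln(R(hc_2))/\ln(R(hc_1))$, which is exactly the claimed modified exponent. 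This is the entire computation, and it is short.

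The only real subtlety — and the step I would be most careful about — is the well-definedness of the non-integer power $p_1^{\tilde{\sigma}}$ and of the logarithms defining $\tilde{\sigma}$. Since $\tilde{\sigma}$ is generically irrational, one must restrict to a region where $p_1(\bx) > 0$ (or fix a branch), and one needs $R(hc_1), R(hc_2) > 0$ for the real logarithms to make sense; both hold for $h$ small enough, since $R(z) = 1 + z + O(z^2)$ implies $R(hc_i)\to 1$ as $h\to 0$. I would close with the consistency check that in this limit $\ln(R(hc_i)) = hc_i + O(h^2)$, whence $\tilde{\sigma}\to c_2/c_1 = \sigma$, confirming that the modified integral degenerates to the original first integral $H$ as the step size vanishes.
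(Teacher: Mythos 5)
Your proof is correct and follows essentially the same route as the paper's: invoke Corollary \ref{thm:stabfun1} to get the constant discrete cofactors $R(hc_1)$ and $R(hc_2)$, substitute into $\tilde{H}$, and choose $\tilde{\sigma}$ so that the scalar prefactor $R(hc_1)^{\tilde{\sigma}}/R(hc_2)$ equals one. The paper states this more tersely; your added remarks on branch/positivity and the $h\to 0$ consistency check are sensible supplements rather than a different argument.
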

 \begin{proof}
 	According to corollary \ref{thm:stabfun1}, $\varphi_h$ preserves the Darboux polynomials  $p_1(\bx)$ and $p_2(\bx)$ with the modified cofactors $R(hc_1)$ and $R(hc_2)$. It follows that $\tilde{H}(\bx)$ is also a Darboux polynomial of $\varphi_h$ with cofactor 1, and hence is a first integral of $\varphi_h$. 
 \end{proof}
 This is demonstrated by the following example. 
 \begin{example}[An ODE with a non-rational integral]\label{eg:nonrat}
 	Consider the following ODE in three dimensions
 	\begin{equation}
 	\frac{\mathrm{d}}{\mathrm{d}t}\left(\begin{array}{c}
 	x_1\\x_2\\x_3\\
 	\end{array}\right)
 	=
 	\left(\begin{array}{c}
 	x_{{1}}x_{{2}}+x_{{2}}x_{{3}}+x_{{1}}\\-x_{{1}}x_{{2}}-x_{{2}}x_{{3}}+x_{{2}}\\\left( \sigma-1 \right)  \left( x_{{1}}+x_{{2}} \right) +\sigma\,x_{{
 			3}}\\
 	\end{array}\right),
 	\end{equation}
 	where $\sigma\in\mathbb{R}$. Discretising the above ODE by the forward Euler  method $\varphi_h$ with stability function $R(z) = 1+z$, we find the following affine Darboux polynomials and their corresponding discrete constant cofactors 
 	\begin{gather}
 		p_1(\bx) = x_1+x_2,\quad \tilde{c}_1 = 1 + h,\\
 		p_2(\bx) = x_1+x_2+x_3,\quad \tilde{c}_2 = 1 + \sigma h.
 	\end{gather}
 	Therefore 
 	\begin{equation}
 	\tilde{H} = \frac{(x_1+x_2)^{{\tilde{\sigma}}}}{x_1+x_2+x_3}
 	\end{equation}
 	defines an $h$-dependent integral of $\varphi_h$ with 
 	\begin{equation}
 	\tilde{\sigma} = \frac{\ln\left(1+\sigma h\right)}{\ln\left(1+h\right)}.
 	\end{equation}
 	We can take the continuum limit $\lim\limits_{h\rightarrow0}\left(\tilde{\sigma}\right)=\sigma$, which implies that $\lim\limits_{h\rightarrow0}\tilde{H}(x)=H(x)$, where
 	\begin{equation}
 	H(x)=\frac{(x_1+x_2)^{\sigma}}{x_1+x_2+x_3}.
 	\end{equation}
 	Indeed, the irrational integral $H(x)$ is preserved by the flow of the original ODE. 
 \end{example}
 
\section{ODEs with affine higher integrals}
The notion of first, second, third\footnote{A third integral is a function $K(\bx)$ that is preserved on a particular level set of a first integral $H(\bx)$, e.g., $\dot{K}(\bx) = c(\bx) H(\bx)$} and higher integrals can be generalized by considering solutions of the linear system for
$\underline{{\bp}}(\bx) = (p_1(\bx), p_2(\bx), ..., p_m(\bx))^T\in\mathbb{R}^m$ satisfying \cite{goriely2001integrability}
\begin{equation}
\dot{\underline{\bp}}(\bx) = L(\bx)\underline{\bp}(\bx),
\end{equation}
where $L(\bx)\in\mathbb{R}^{n\times n}$. If $L_{ij}=0$ for $j=1,...,n$ then $p_i(\bx)$ is a first integral and is preserved from arbitrary initial conditions. If $L_{ii}\ne 0$ and $L_{ij}=0$ for $j\ne i$ then $p_i(\bx)$ is a Darboux polynomial with cofactor $L_{ii}$ and is preserved for initial conditions that begin on its zero level sets. If, for example, $L_{ik}\ne 0$ and $L_{ij}=0$ for $j\ne i,k$ then $p_{i}(\bx)$ is preserved for initial conditions starting on the zero level sets of $p_k(\bx)$. Here, if $p_k(\bx)$ is a first integral, then $p_i(\bx)$ is called a third integral. In this sense, one can define the notion of higher integrals for ODE systems. In this section we consider Runge-Kutta methods applied to ODEs with a sub-system of affine higher integrals with constant coefficient matrix $L$. 

 \begin{corollary}\label{thm:Ap}
 	Consider an autonomous ODE in $n$ dimensions that possesses a system of $m\le n$ linearly-independent affine polynomials $\underline{{\bp}}(\bx) = (p_1(\bx), p_2(\bx), ..., p_m(\bx))^T\in\mathbb{R}^m$ satisfying 
 	\begin{equation}\label{ldp}
 	\dot{\underline{\bp}}(\bx) = L\underline{\bp}(\bx),
 	\end{equation}
 	where $L$ is an $m\times m$ matrix that is independent of $\bx$. Then a Runge-Kutta map $\varphi_h$ satisfies
 	\begin{equation}
 	\underline{\bp}(\varphi_h(\bx))=R(hL)\underline{\bp}(\bx),
 	\end{equation}
 	where $R(z)$ is the stability function of $\varphi_h$. 
 \end{corollary}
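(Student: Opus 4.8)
The plan is to generalise the proof of Corollary \ref{thm:stabfun1} from a scalar constant cofactor $\lambda$ to the matrix $L$, replacing the scalar stability function $R(\lambda h)$ by the matrix stability function $R(hL)$. First I would encode the affine system algebraically: since each $p_i$ is affine (and the constant terms may be taken to be zero), write $p_i(\bx)=\bp_i^T\bx$ and stack the coefficient vectors into a matrix $P\in\R^{m\times n}$ whose $i$th row is $\bp_i^T$, so that $\underline{\bp}(\bx)=P\bx$; linear independence of the $p_i$ means $P$ has full row rank $m$. Differentiating along the flow and using \eqref{ldp} gives $P\bff(\bx)=P\dot{\bx}=\dot{\underline{\bp}}(\bx)=L\underline{\bp}(\bx)=LP\bx$, so the hypothesis is equivalent to the pointwise identity $P\bff(\bx)=LP\bx$. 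This is the matrix analogue of the relation $\bp^T\bff=c\,\bp^T\bx$ used in Theorem \ref{thm: lin RK cofactors}.

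Next I would push $P$ through the Runge-Kutta map exactly as in Theorem \ref{thm: lin RK cofactors}. From \eqref{rk1}, $\underline{\bp}(\varphi_h(\bx))=P\bx+h\sum_{j}b_j P\bff(\bg_j)=\underline{\bp}(\bx)+hL\sum_j b_j P\bg_j$, using $P\bff(\bg_j)=LP\bg_j$. The stage equations \eqref{rk0} give the closed linear system $P\bg_i=\underline{\bp}(\bx)+h\sum_j a_{ij}LP\bg_j$ for the vectors $P\bg_i=\underline{\bp}(\bg_i)\in\R^m$. The key move is to write this system with Kronecker products: stacking the $s$ vectors $\underline{\bp}(\bg_i)$ into $\mathbf{P}_g\in\R^{sm}$, the stage relations become $\mathbf{P}_g=(\dsone_s\otimes I_m)\underline{\bp}(\bx)+h(\mcA\otimes L)\mathbf{P}_g$, hence $\mathbf{P}_g=(I_{sm}-h\mcA\otimes L)^{-1}(\dsone_s\otimes I_m)\underline{\bp}(\bx)$, valid whenever $I_{sm}-h\mcA\otimes L$ is invertible (certainly for all sufficiently small $h$). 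Substituting back yields $\underline{\bp}(\varphi_h(\bx))=\bigl(I_m+h(\bb^T\otimes L)(I_{sm}-h\mcA\otimes L)^{-1}(\dsone_s\otimes I_m)\bigr)\underline{\bp}(\bx)$.

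It then remains to recognise the bracketed matrix as $R(hL)$. Expanding the resolvent as the Neumann series $(I_{sm}-h\mcA\otimes L)^{-1}=\sum_{k\ge0}h^k(\mcA^k\otimes L^k)$ and applying the mixed-product rule $(A\otimes B)(C\otimes D)=AC\otimes BD$ collapses each term to a scalar times a power of $L$, so the bracket becomes $I_m+\sum_{k\ge0}h^{k+1}(\bb^T\mcA^k\dsone_s)L^{k+1}$. This is precisely the matrix function obtained by substituting $z\mapsto hL$ into the series $R(z)=1+\sum_{k\ge0}z^{k+1}\bb^T\mcA^k\dsone_s$ associated with \eqref{stabfun}, giving $\underline{\bp}(\varphi_h(\bx))=R(hL)\underline{\bp}(\bx)$ as claimed.

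The main obstacle is purely the non-commutative, matrix-valued nature of the statement: for the scalar Corollary \ref{thm:stabfun1} one simply diagonalises the one-dimensional test equation, whereas here $R(hL)$ must first be given meaning and then matched to the Kronecker expression. I expect the cleanest route is the Neumann-series identity above, which holds for small $h$ and extends to all $h$ for which $I_{sm}-h\mcA\otimes L$ is invertible because both sides are rational in $h$ and agree on an interval. An alternative I would mention is to invoke affine equivariance of Runge-Kutta methods, as in Corollary \ref{thm:stabfun1}: completing $\bp_1,\dots,\bp_m$ to an affine coordinate system reduces the problem to a linear test system $\dot{\by}=L\by$ whose first $m$ coordinates form a closed subsystem, so the stage values for those coordinates decouple from the rest and the matrix stability-function computation applies directly. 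For non-diagonalisable $L$ one can either argue by density of diagonalisable matrices together with continuity of both sides in the entries of $L$, or simply retain the Kronecker computation, which requires no diagonalisability at all.
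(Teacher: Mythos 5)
Your argument is correct, but it takes a more computational route than the paper. The paper's proof is two lines: it observes that $\underline{\bp}=D\bx$ obeys the closed linear subsystem $\dot{\underline{\bp}}=L\underline{\bp}$, cites the standard fact that a Runge--Kutta method applied to a linear ODE yields $R(hL)$, and then uses equivariance of Runge--Kutta methods under linear maps to transfer the conclusion back to $\underline{\bp}(\varphi_h(\bx))$ --- essentially your ``alternative'' remark at the end. Your primary argument instead redoes the stage computation of Theorem~\ref{thm: lin RK cofactors} in matrix form: from $P\bff(\bx)=LP\bx$ you close the stage equations as $\mathbf{P}_g=(I_{sm}-h\mcA\otimes L)^{-1}(\dsone_s\otimes I_m)\underline{\bp}(\bx)$ and collapse the update via the mixed-product rule to $I_m+\sum_{k\ge0}h^{k+1}(\bb^T\mcA^k\dsone_s)L^{k+1}=R(hL)$; each step checks out, and the extension from small $h$ by rationality is sound. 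What your version buys is self-containedness and rigor at one slightly delicate point: since $D$ (your $P$) is $m\times n$ with $m\le n$ and not invertible, ``commuting with linear transformations'' is really the statement that the stages of the full system project onto the stages of the subsystem, which your direct computation establishes rather than assumes; it also handles non-diagonalisable $L$ with no extra work. What the paper's route buys is brevity and a clear conceptual link to the classical theory of stability functions for linear systems.
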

 \begin{proof}
 	Let $\underline{\bp}(\bx)=D\bx$, where $D$ is an $m\times n$ matrix of rank $m\le n$. The numerical solution of $\varphi_h$ applied to a linear ODE in $n$ dimensions $\dot{\underline{\bp}}=L\underline{\bp}$ is given by (see for example, \cite[p. 194]{hairer2006geometric})
 	\begin{equation}
 	\varphi_h(\underline{\bp}) = R(hL) \underline{\bp}.
 	\end{equation}
 	As Runge-Kutta methods commute with linear transformations we have $\varphi_h(\underline{\bp}(\bx))=\underline{\bp}(\varphi_h(\bx))$, which yields the desired result. 
 \end{proof}
 We remark that an ODE in $n$ dimensions cannot possess more than $n$ functionally independent affine Darboux polynomials with constant cofactor. We demonstrate corollary \ref{thm:Ap} in an example where $L$ is given in Jordan form. But first we will briefly introduce a particular class of Runge-Kutta methods called \textit{diagonal Pad{\'e} Runge-Kutta methods}. 
 \begin{definition}\label{diagpade}
 	A \textit{diagonal Pad{\'e} Runge-Kutta map} is an $s$-stage Runge-Kutta map $\varphi_h$ whose stability function $R(z) = \frac{P(z)}{Q(z)}$ has equal degree in the numerator and denominator, that is $\deg(P(z)) = \deg(Q(z))$. Such a stability function is an order $s$ approximation to $e^{z}$ with numerator and denominator given by
 	\begin{align}
 	P(z) &= \sum_{i=0}^{s}a_i z^i \\
 	Q(z) &= \sum_{i=0}^{s}(-1)^i a_i z^i = P(-z),
 	\end{align}
 	where $a_i$ are the constants
 	\begin{equation}
 	a_i = \frac{s!(2s-i)!}{(2s)!i!(s-i)!}.
 	\end{equation}
 	See, for example, \cite{reusch1988diagonal} and references therein. Such a map has a stability function that satisfies $R(-z)R(z)=1$.
 \end{definition}
 As an example of some well known Diagonal Pad{\'e} Runge-Kutta methods, consider the Runge-Kutta map $\varphi_h(\bx)=\bx'$ defined by 
 \begin{equation}\label{RKa}
 \frac{\bx'-\bx}{h} = (1-2\theta)\bff(\frac{\bx+\bx'}{2}) + \theta \bff(\bx') + \theta\bff(\bx)
 \end{equation}
 then its stability function is the diagonal Pad{\'e} approximation 
 \begin{equation}\label{diag pade O2}
 R(\lambda h) = \frac{1+\frac{1}{2}\lambda h}{1-\frac{1}{2}\lambda h}. 
 \end{equation}
 Note that the three cases $\theta=0$, $\theta=\frac{1}{2}$ and $\theta=-\frac{1}{2}$ respectively correspond to the midpoint rule, trapezoidal rule and Kahan's method, the latter being when $\bff(\bx)$ is quadratic \cite{celledoni2012geometric}. \\
 
 \begin{example}[A 3D ODE with a 2D linear subsystem in Jordan form]
 	Consider the following quadratic ODE in 3 dimensions that was considered in example \eqref{eg:nonrat}
 	\begin{equation}
 	\frac{\mathrm{d}}{\mathrm{d}t}\left(\begin{array}{c}
 	x_1\\x_2\\x_3\\
 	\end{array}\right)
 	=
 	\left(\begin{array}{c}
 	x_{{1}}x_{{2}}+x_{{2}}x_{{3}}+\sigma x_{{2}}\\-x_{{1}}x_{{2}}-x_{{2}}x_{{3}}+\sigma x_{{1}}\\ \left( x_{{1}}+x_{{2}} \right) +\sigma\,x_{{
 			3}}\\
 	\end{array}\right).
 	\end{equation}
 	Setting $\underline{\bp}(\bx)=(x_1+x_2+x_3,x_1+x_2)^T$ then the above ODE has the following linear subsystem
 	\begin{equation}
 	\dot{\underline{\bp}}(\bx) =\left( \begin{array}{cc}
 	\sigma & 1 \\
 	0 & \sigma \\
 	\end{array} \right) \underline{\bp}(\bx) := L \underline{\bp}(\bx).
 	\end{equation}
 	If we apply Kahan's method then we get
 	\begin{align}
 	\underline{\bp}(\varphi_h(\bx)) = & \left( \begin {array}{cc} {\frac {-h\sigma-2}{h\sigma-2}}&4\,{\frac {
 			h}{ \left( h\sigma-2 \right) ^{2}}}\\ \noalign{\medskip}0&{\frac {-h
 			\sigma-2}{h\sigma-2}}\end {array}
 	\right) \underline{\bp}(\bx)\\
 	=& \left(\begin{array}{cc}
 	R(h\sigma) & R'(h\sigma) \\
 	0 & R(h\sigma) \\
 	\end{array} \right) \underline{\bp}(\bx)= R(hL) \underline{\bp}(\bx),
 	\end{align} 
 	which is the form prescribed in corollary \ref{thm:Ap}.
 	
 \end{example}
 
 It is known that when the Kahan map is applied to a Hamiltonian ODE that it preserves a modified Hamiltonian \cite{celledoni2012geometric}. Here we show that there exists some cases where the Kahan map (as well as other diagonal Pad{\'e} Runge-Kutta maps) preserves a quadratic first integral exactly.
 
 \begin{example}[An ODE with a quadratic integral that is preserved exactly]
 	Consider the quadratic ODE in three dimensions
 	\begin{equation}
 	\frac{\mathrm{d}}{\mathrm{d}t}\left(\begin{array}{c}
 	x_1\\x_2\\x_3\\
 	\end{array}\right)
 	=
 	\left(\begin{array}{c}
 	2\,x_{{2}}+x_{{1}}+x_{{3}}+x_{{3}} \left( x_{{1}}+x_{{2}} \right) +{x_
 		{{1}}}^{2} \\
 	-x_{{1}}-x_{{2}}-x_{{3}} \left( x_{{1}}+x_{{2}} \right) -{x_{{1}}}^{2}\\
 	x_{{3}} \left( x_{{1}}+x_{{2}} \right) +{x_{{1}}}^{2}
 	\end{array}\right)	
 	\end{equation}
 	Setting $\underline{\bp}(\bx)=(x_1+x_2,x_2+x_3)^T$ then this satisfies
 	\begin{equation}
 	\dot{\underline{\bp}}(\bx) =\left( \begin{array}{cc}
 	0& 1 \\
 	-1 & 0 \\
 	\end{array} \right) \underline{\bp}(\bx)
 	\end{equation}
 	If we apply the Kahan discretisation to the above ODE we find that $p_{1}(\bx) = \|\underline{\bp}(\bx)\|^2=(x_1+x_2)^2+(x_2+x_3)^2$ is a quadratic Darboux polynomial of $\varphi_h$ with cofactor $\tilde{c}=1$. Therefore $H(\bx)= \|\underline{\bp}(\bx)\|^2$ is an integral of the map, i.e., $H(\varphi_h(\bx))=H(\bx)$. Moreover, we find that any diagonal Pad{\'e} Runge-Kutta map preserves this integral. This is due to the following corollary. 
 \end{example}
 
 \begin{corollary}\label{thm:skewA}
 	Given an ODE $\dot{\bx}=\bff(\bx)$, if there exists $m$ linear polynomials $\underline{\bp}(x)=(p_1(x),p_2(x),...,p_m(x))^T$ that satisfy \begin{equation}
 	\dot{\underline{\bp}} = AS\underline{\bp}(\bx), \quad\mathrm{where}\quad A=-A^T\quad\mathrm{and}\quad S=S^T \label{skewLinODE}
 	\end{equation}
 	then $H(x)=\underline{\bp}(\bx)^TS\underline{\bp}(\bx)$ is a first integral of the ODE and any diagonal Pad{\'e} Runge-Kutta map (e.g., one of the form \eqref{RKa}) preserves this integral exactly. 
 \end{corollary}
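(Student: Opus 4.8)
The plan is to establish two facts: first, that $H(\bx)=\underline{\bp}(\bx)^TS\underline{\bp}(\bx)$ is a first integral of the continuous ODE, and second, that a diagonal Pad\'e Runge-Kutta map preserves it exactly. The first is an easy direct computation. Differentiating along the flow and using \eqref{skewLinODE} gives
\begin{equation}
\dot{H} = \dot{\underline{\bp}}^TS\underline{\bp}+\underline{\bp}^TS\dot{\underline{\bp}} = \underline{\bp}^T(AS)^TS\underline{\bp}+\underline{\bp}^TS(AS)\underline{\bp} = \underline{\bp}^T\big(S^TA^TS+SAS\big)\underline{\bp}.
\end{equation}
Since $A=-A^T$ and $S=S^T$, the bracket becomes $S(-A)S+SAS = 0$, so $\dot{H}=0$ and $H$ is a first integral.

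For the discrete part, the key observation is that the hypothesis \eqref{skewLinODE} is exactly the setting of corollary \ref{thm:Ap} with constant matrix $L:=AS$. Therefore any Runge-Kutta map $\varphi_h$ with stability function $R(z)$ satisfies $\underline{\bp}(\varphi_h(\bx))=R(hL)\underline{\bp}(\bx)=R(hAS)\underline{\bp}(\bx)$. Evaluating the discrete integral along one step then gives
\begin{equation}
H(\varphi_h(\bx)) = \underline{\bp}(\varphi_h(\bx))^TS\,\underline{\bp}(\varphi_h(\bx)) = \underline{\bp}(\bx)^T R(hAS)^T S\, R(hAS)\,\underline{\bp}(\bx),
\end{equation}
so the claim reduces to the purely algebraic identity $R(hAS)^TS\,R(hAS)=S$, i.e.\ that $R(hAS)$ is an $S$-orthogonal matrix.

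The main obstacle is this matrix identity, and I expect it to hinge on two structural facts about $M:=AS$. First, $M$ is $S$-skew in the sense that $SM=SAS$ is antisymmetric, equivalently $M^TS=-SM$ (using $M^T=S^TA^T=-SA$ and $SM=SAS$, noting $(SAS)^T=-SAS$). Second, I would use the defining property of a diagonal Pad\'e stability function from definition \ref{diagpade}, namely $R(-z)R(z)=1$, lifted to the matrix argument. The strategy is to show $R(hM)^TS = S\,R(-hM)$ by pushing $S$ through $R$ term by term: since $M^TS=-SM$, one gets $(M^k)^TS = (M^T)^kS = (-1)^kSM^k$, and because $R(z)=P(z)/Q(z)$ with $Q(z)=P(-z)$ the rational function composes cleanly so that $R(hM)^TS=S\,R(-hM)$. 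Combining with $R(-hM)R(hM)=R(-hM)R(hM)$, and invoking $R(-z)R(z)=1$ evaluated at the matrix $hM$ (which is legitimate since $R(-hM)$ and $R(hM)$ are polynomials in $M$ and hence commute), yields $R(hM)^TS\,R(hM)=S\,R(-hM)R(hM)=S$, which is the required identity. The one point demanding care is justifying that $R(-z)R(z)=1$ continues to hold when $z$ is replaced by the matrix $hM$; this follows because $R(hM)$ is defined through the commuting polynomials $P(hM)$ and $Q(hM)=P(-hM)$, so the scalar identity $P(z)P(-z)=Q(-z)Q(z)$ transfers directly to the matrix setting.
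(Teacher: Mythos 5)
Your proposal is correct and follows essentially the same route as the paper: both reduce the claim to the matrix identity $R(hAS)^T S\, R(hAS)=S$ and establish it from the transpose relation $(AS)^T=-SA$, the intertwining $P(hSA)S=SP(hAS)$, and the diagonal Pad\'e property $R(z)=P(z)P(-z)^{-1}$ with $R(z)R(-z)=1$; your version merely packages these into the intermediate identity $R(hAS)^TS=S\,R(-hAS)$ rather than expanding everything in terms of $P$ as the paper does. You also supply the (easy) verification that $H$ is a first integral of the continuous ODE, which the paper's proof omits.
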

 \begin{proof}
 	Using the fact that $R(z)=P(z)P(-z)^{-1}=P(-z)^{-1}P(z)$, $P(hAS)^T=P(-hSA)$ and $P(hSA)S=S\,P(hAS)$ then it's straight forward to show that $H$ is preserved under $\varphi_h$
 	\begin{align}
 	H(\varphi_h(\bx))  = &	\underline{\bp}(\bx)^TR(hAS)^TSR(hAS)\underline{\bp}(\bx) \\
 	= & \underline{\bp}(\bx)^TP(-hAS)^{-T}P(hAS)^TSP(hAS)P(-hAS)^{-1}\underline{\bp}(\bx) \\
 	= & \underline{\bp}(\bx)^TP(hSA)^{-1}P(-hSA)SP(hAS)P(-hAS)^{-1}\underline{\bp}(\bx) \\
 	= & \underline{\bp}(\bx)^TSP(hAS)^{-1}P(-hAS)P(-hAS)^{-1}P(hAS)\underline{\bp}(\bx) \\
 	= & \underline{\bp}(\bx)^TS\underline{\bp}(\bx) \\
 	= & H(\bx)
 	\end{align}
 \end{proof}
 We can therefore make the following statement about linear ODEs with quadratic integrals. 
 \begin{corollary}
 	For all linear ODEs with quadratic first integrals, all diagonal Pad{\'e} Runge-Kutta maps preserve the integral exactly. \\
 \end{corollary}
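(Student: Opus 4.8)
The plan is to specialize Corollary \ref{thm:skewA} to the linear setting. Write the linear ODE as $\dot{\bx}=B\bx$ for a constant matrix $B\in\R^{n\times n}$ and take the quadratic first integral in the homogeneous form $H(\bx)=\bx^TS\bx$ with $S=S^T$. Differentiating along the flow gives $\dot H=\bx^T(B^TS+SB)\bx$, and since this vanishes for all $\bx$ while $B^TS+SB$ is symmetric, the integral condition is exactly
\begin{equation}
SB+B^TS=0,
\end{equation}
i.e. $SB$ is skew-symmetric. The goal is then to exhibit the data $(\underline{\bp},A,S)$ required by Corollary \ref{thm:skewA}.

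When $S$ is nonsingular this is immediate: set $A:=BS^{-1}$, so that $B=AS$ and $\dot{\bx}=AS\bx$. Using $SB+B^TS=0$ and the symmetry of $S^{-1}$ one checks $A^T=S^{-1}B^T=-BS^{-1}=-A$, so $A$ is skew-symmetric. Taking $\underline{\bp}(\bx)=\bx$ (the case $m=n$) puts the system in the form \eqref{skewLinODE}, and Corollary \ref{thm:skewA} shows every diagonal Pad{\'e} Runge-Kutta map preserves $H(\bx)=\underline{\bp}(\bx)^TS\underline{\bp}(\bx)$ exactly.

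The one genuine obstacle is a degenerate quadratic form, i.e. singular $S$, for which no factorisation $B=AS$ need exist (a kernel vector $\bv$ of $S$ would force $AS\bv=0$, whereas $B\bv$ need only lie in $\ker S$). Here I would use the affine equivariance of Runge-Kutta methods \cite{mclachlan1998six} to pass, via a congruence $\by=T\bx$, to coordinates in which $S=\diag(\hat S,0)$ with $\hat S$ an $r\times r$ nonsingular symmetric block ($r=\operatorname{rank}(S)$). Writing $B$ in the corresponding block partition, the condition $SB+B^TS=0$ forces $\hat SB_{12}=0$, hence $B_{12}=0$, together with $\hat SB_{11}+B_{11}^T\hat S=0$. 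The vanishing of $B_{12}$ means the first $r$ coordinates $\by_1$ form a closed linear subsystem $\dot{\by}_1=B_{11}\by_1$ on which $H=\by_1^T\hat S\by_1$ is a nonsingular quadratic integral. Applying the nonsingular argument above to this subsystem---equivalently, invoking Corollary \ref{thm:skewA} with $\underline{\bp}=\by_1$, $m=r\le n$, skew part $\hat A=B_{11}\hat S^{-1}$, and symmetric part $\hat S$---yields preservation of $H$ by every diagonal Pad{\'e} map, and affine equivariance transports this conclusion back to the original coordinates. The point that must be checked carefully is precisely this reduction: that degeneracy of $S$ localises the integral onto an invariant subspace on which the hypotheses of Corollary \ref{thm:skewA} hold; once that is in hand the statement follows as a direct corollary.
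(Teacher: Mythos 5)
Your proposal is correct and follows essentially the same route as the paper: factor the linear field as $\dot{\bx}=AS\bx$ with $A=BS^{-1}$ skew-symmetric and invoke Corollary \ref{thm:skewA}. The paper's own proof is exactly this one line and silently assumes the factorisation $B=AS$ exists; your block reduction for singular $S$ (where the factorisation genuinely can fail, e.g.\ $S=\diag(1,0)$, $B=\diag(0,1)$) is a worthwhile supplement rather than a departure, and the final appeal to affine equivariance is not even needed since Corollary \ref{thm:skewA} already accepts the restricted linear functions $\underline{\bp}=\by_1$ of $\bx$ directly.
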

 \begin{proof}
 	A linear ODE with a quadratic integral $H=\frac{1}{2}\bx^TS\bx$ can be written in the form 
 	\begin{equation}
 	\dot{\bx} = A\nabla H = AS\bx
 	\end{equation}
 	for $A=-A^T$ and $S=S^T$, which is in the form of \eqref{skewLinODE} and according to corollary \ref{thm:skewA}, all diagonal Pad{\'e} Runge-Kutta maps preserve the integral $H$
 \end{proof}
 This is a slight generalisation of proposition 5 in \cite{celledoni2009energy}. \\
 
 Corollary \ref{thm:skewA} also applies to ODEs with multiple quadratic integrals as shown in the following example. 
 \begin{example}[A 5D system with 2 quadratic integrals preserved exactly]
 	Consider the following ODE
 	{\footnotesize
 		\begin{equation}
 		\frac{\mathrm{d}}{\mathrm{d}t}\left(\begin{array}{c}
 		x_1\\x_2\\x_3\\x_4\\x_5\\
 		\end{array}\right)
 		= \left(\begin{array}{c}
 		{x_{{3}}}^{2}+2\,x_{{3}}x_{{4}}+2\,x_{{3}}x_{{5}}+2\,{x_{{4}}}^{2}+2\,
 		x_{{4}}x_{{5}}+2\,{x_{{5}}}^{2}-10\,x_{{1}}-17\,x_{{2}}-7\,x_{{3}}-5\,
 		x_{{4}}+4\,x_{{5}}
 		
 		\\-{x_{{3}}}^{2}-2\,x_{{3}}x_{{4}}-2\,x_{{3}}x_{{5}}-2\,{x_{{4}}}^{2}-2
 		\,x_{{4}}x_{{5}}-2\,{x_{{5}}}^{2}+6\,x_{{1}}+11\,x_{{2}}+5\,x_{{3}}+5
 		\,x_{{4}}-3\,x_{{5}}
 		
 		\\{x_{{3}}}^{2}+2\,x_{{3}}x_{{4}}+2\,x_{{3}}x_{{5}}+2\,{x_{{4}}}^{2}+2\,
 		x_{{4}}x_{{5}}+2\,{x_{{5}}}^{2}-x_{{2}}-x_{{3}}-4\,x_{{4}}+2\,x_{{5}}
 		
 		\\-x_{{1}}-2\,x_{{2}}-x_{{3}}+2\,x_{{4}}
 		
 		\\x_{{1}}+3\,x_{{2}}+2\,x_{{3}}+2\,x_{{4}}-2\,x_{{5}}
 		
 		\\
 		\end{array}\right)
 		\end{equation}}
 Let $\lambda = 2+i$, where $i^2=-1$ and discretise the ODE using Kahan's method, which is a diagonal Pad{\'e} Runge-Kutta map. We find the following affine discrete Darboux polynomials with constant cofactor.
 	\begin{equation}
 	\begin{array}{c|c|c}
 	j&p_j(x) & \tilde{c}_j \\
 	\hline 
 	1&-ix_{{4}}+x_{{1}}+2\,x_{{2}}+x_{{3}} &R(\lambda h)
 	\\
 	2&\left( 106+52\,i \right) x_{{2}}+41\,x_{{3}}- \left( 52-24\,i
 	\right) x_{{5}}+ \left( 65+52\,i \right) x_{{1}}+ \left( 12-15\,i
 	\right) x_{{4}}&R(-\lambda h)
 	\\
 	3&ix_{{4}}+x_{{1}}+2\,x_{{2}}+x_{{3}}&R(\lambda^* h)
 	\\
 	4&\left( 106-52\,i \right) x_{{2}}+41\,x_{{3}}- \left( 52+24\,i
 	\right) x_{{5}}+ \left( 65-52\,i \right) x_{{1}}+ \left( 12+15\,i
 	\right) x_{{4}}&R(-\lambda^* h)
 	\\
 	\end{array}
 	\end{equation}
 	From the above four linear Darboux polynomials, we can construct the following complex quadratic integrals
 	\begin{equation}
 	K_1 = p_1(\bx)p_2(\bx),\quad K_2 = p_3(\bx)p_4(\bx),
 	\end{equation}
 	however if we observe that $K_1=K_2^*$ then the following two independent real integrals can be constructed 
 	\begin{equation}
 	H_1 = K_1+K_2,\quad H_2 = i(K_1-K_2).
 	\end{equation}
 	As these two integrals are independent of $h$, it follows that the ODE also possesses these integrals. 
 \end{example}

 \subsection{Detecting the existence of affine higher integrals}
 Many of our results in this section so far rely on the fact that there exist a change of coordinates $\underline{\bp}(x)=Dx$ that allow us to find a linear ODE subsystem of higher integrals of the form $\dot{\underline{\bp}} = L\underline{\bp}$. A logical question to ask is what is the most general class of ODEs where this sub-system exists and how to calculate this transformation. This is now addressed.
 \begin{theorem}
 	If an ODE in $n$ dimensions can be expressed in the following form
 	\begin{equation}\label{genODE}
 	\dot{\bx} = A \bx + \sum_{i=1}^{k}b_i(\bx) \bv_i
 	\end{equation}
 	for the scalar functions $b_i(\bx)$ and the matrix $V:=[\bv_1,...,\bv_k]\in\mathbb{R}^{n\times k}$ has rank $n-m$, where $m<n$, then there exists the linear transformations $\underline{\bp}=Q\bx\in \mathbb{R}^m$ and $\by=R\bx\in \mathbb{R}^{n-m}$ that decouple the ODE into an $m$ dimensional linear ODE for $\underline{\bp}$ and an $n-m$ dimensional non-linear ODE for $\by$ 
 	\begin{align}
 	\dot{\underline{\bp}} =& L\underline{\bp} \label{decoup}\\
 	\dot{\by} =& \bg(\underline{\bp},\by) \label{decoup2}\\
 	\end{align} 
 	for some matrix $L$. 
 \end{theorem}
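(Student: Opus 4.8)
The plan is to build the decoupling transformation directly from the column space of $V$. Since $V\in\R^{n\times k}$ has rank $n-m$, its left null space $\{\,\boldsymbol{w}^{T}:\boldsymbol{w}^{T}V=0\,\}$ has dimension $m$; I would let $Q\in\R^{m\times n}$ be a matrix whose rows form a basis of this space, so that $QV=0$ and $Q$ has full row rank $m$. I would then pick any $R\in\R^{(n-m)\times n}$ completing $T:=\begin{pmatrix}Q\\ R\end{pmatrix}$ to an invertible $n\times n$ matrix, and define the new coordinates $\underline{\bp}=Q\bx\in\R^{m}$ and $\by=R\bx\in\R^{n-m}$. The candidate linear subsystem will be carried by $\underline{\bp}$, exactly as in the earlier corollaries where $\underline{\bp}=D\bx$.

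First I would differentiate $\underline{\bp}$. Using \eqref{genODE},
\begin{equation}
\dot{\underline{\bp}} = Q\dot{\bx} = QA\bx + \sum_{i=1}^{k} b_i(\bx)\,Q\bv_i = QA\bx,
\end{equation}
since every column $\bv_i$ of $V$ is annihilated by $Q$ (that is, $Q\bv_i=0$), so all of the nonlinear terms drop out at once. It remains to show that the right-hand side depends on $\bx$ only through $\underline{\bp}=Q\bx$, i.e.\ that $QA=LQ$ for some $L\in\R^{m\times m}$; one then takes $L:=QAQ^{+}$ for any right inverse $Q^{+}$ of $Q$, which is well defined precisely when each row of $QA$ lies in the row space of $Q$. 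The hard part will be establishing exactly this. Because $Q$ has full row rank with $\ker Q=\mathrm{col}(V)$ (the dimensions match, $\dim\ker Q=n-m=\mathrm{rank}(V)$, and $QV=0$ forces $\mathrm{col}(V)\subseteq\ker Q$), the condition $QA=LQ$ is equivalent to the single matrix identity
\begin{equation}
QAV=0,
\end{equation}
which in turn says that $\mathrm{col}(V)$ is invariant under $A$, i.e.\ $A\,\mathrm{col}(V)\subseteq\mathrm{col}(V)$. This invariance is the substantive content of the theorem: it is what must be arranged by the choice of decomposition \eqref{genODE}, and I would verify it (or impose it as the defining regularity of the admissible class) before reading off $L$ and hence \eqref{decoup}.

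Finally, the $\by$-equation \eqref{decoup2} requires no closure argument and should follow for free. Since $T$ is invertible I would write $\bx=T^{-1}(\underline{\bp}^{T},\by^{T})^{T}$ and substitute into $\dot{\by}=R\dot{\bx}=RA\bx+\sum_i b_i(\bx)R\bv_i$; every occurrence of $\bx$ then becomes a function of $(\underline{\bp},\by)$, so the right-hand side is some $\bg(\underline{\bp},\by)$ with no constraint left to check. Thus the entire difficulty is concentrated in the invariance $QAV=0$ of the previous step, after which the triangular structure $\dot{\underline{\bp}}=L\underline{\bp}$, $\dot{\by}=\bg(\underline{\bp},\by)$ simply drops out of the block form of $TAT^{-1}$.
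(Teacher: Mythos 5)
Your construction is exactly the paper's: take the rows of $Q$ to be a basis of the left null space of $V$, so $Q\bv_i=0$ kills the nonlinear terms and $\dot{\underline{\bp}}=QA\bx$, then complete with any $R$ making $T=\left(\begin{smallmatrix}Q\\R\end{smallmatrix}\right)$ invertible, after which the $\by$-equation closes for free. The one place you diverge is that you stop and flag the step the paper passes over silently: $QA\bx$ is linear in $\bx$, but for \eqref{decoup} to hold it must be a function of $\underline{\bp}=Q\bx$ alone, i.e.\ $QA=LQ$, equivalently $QAV=0$, equivalently $A\,\mathrm{col}(V)\subseteq\mathrm{col}(V)=\ker Q$. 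You are right that this is the substantive content, and right to be suspicious: it does \emph{not} follow from the stated hypotheses. Take $n=2$ and
\begin{equation}
\dot{x}_1=x_2,\qquad \dot{x}_2=b(x_1,x_2),
\end{equation}
with $b$ nonlinear, say $b=x_1^2$. This is of the form \eqref{genODE} with $A=\left(\begin{smallmatrix}0&1\\0&0\end{smallmatrix}\right)$, $k=1$, $\bv_1=(0,1)^T$, so $V$ has rank $1=n-m$ with $m=1<n$; yet $\alpha x_2+\beta x_1^2=L(\alpha x_1+\beta x_2)$ forces $\beta=0$ and then $\alpha=0$, so no nontrivial one-dimensional linear subsystem exists and the conclusion fails.

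Your proposed escape, that the invariance ``must be arranged by the choice of decomposition,'' does not work either: for fixed $\bff$ and fixed $V$, the only freedom in \eqref{genODE} is $A\mapsto A+\sum_i\bv_i\boldsymbol{\ell}_i^T$, $b_i\mapsto b_i-\boldsymbol{\ell}_i^T\bx$, and this leaves $QA$ unchanged because $Q\bv_i=0$. So $QAV=0$ is an honest additional hypothesis on the ODE, and your alternative suggestion --- impose it as the defining regularity of the admissible class --- is the correct repair; it is satisfied in the paper's worked example and is implicitly what Algorithm 1 tests for. Modulo that missing hypothesis, which is a defect of the theorem statement rather than of your argument, the rest of your proof ($L:=QAQ^{+}$, invertibility of $T$ giving $\bg(\underline{\bp},\by)$) is correct and coincides with the paper's.
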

 \begin{proof}
 	Given an ODE in the form \eqref{genODE}, then choose the linear transformation  $\underline{\bp}=Q\bx$ such that $\bv_i\in\ker(Q)$. Multiplying the ODE \eqref{genODE} by $Q$ gives 
 	\begin{equation}
 	Q \dot{\bx} = QA\bx +\sum_{i=1}^{k}b_i(\bx) Q\bv_i = QA\bx = \dot{\underline{\bp}},
 	\end{equation}
 	which gives the form of the decoupled linear part of the ODE \eqref{decoup}. Now we need to construct the $n-m$ non-linear part of the ODE \eqref{decoup2}. To do this, we simply choose some matrix $R\in\mathbb{R}^{(n-m)\times n}$ whose rows are independent to the rows in $Q$. This then defines the one-to-one transformation 
 	\begin{equation}
 	\bz := \left(\begin{array}{c}
 	\underline{\bp}\\\by\\
 	\end{array}\right) = \left(\begin{array}{c}
 	Q\\R\\
 	\end{array}\right) \bx := G. \bx
 	\end{equation}
 	Then the transformed ODE $$\dot{\bz} = G\bff(\bz)$$ is in the desired form. 
 \end{proof}
 
 
 A logical question now is how does one find such a linear transformation. We will address this now by presenting a systematic algorithm to transform an ODE given in the form \eqref{genODE} into its decoupled form \eqref{decoup} and \eqref{decoup2}. 
 \begin{algorithm}\label{alg}
 	Given an ODE $\dot{\bx} = \bff(\bx)$
 	\begin{enumerate}
 		\item Let $i=1$. 
 		\item Solve the condition $\nabla p_i(\bx)^T \bff(\bx) = \lambda p_i(\bx)$ for affine $p_i(\bx)$ and constant $\lambda_i$. 
 		\subitem (a) If there is no solutions, or they have all been used in successive steps, then end the algorithm. 
 		\subitem (b) If there are one or more solutions, pick one, set $i\rightarrow i+1$ and move to the next step. 
 		\item Set $p_{i-1}(\bx)=0$ in $\bff$ and solve the condition $\left. \nabla p_{i}(\bx)^T \bff(\bx)\right|_{p_{i-1}(\bx)=0} = \lambda_{i} p_{i}(\bx)$ for $p_{i}(\bx)$ and $\lambda_{i}$. 
 		\subitem (a) If there is a solution, then set $i\rightarrow i+1$ and repeat step 3
 		\subitem (b) If there is no solution, go back to step 2 and pick a different Darboux polynomial solution 
 		\item Calculate $Q = \nabla \underline{\bp}(\bx)$
 		\item Calculate $L$ from $\nabla(Q.f) = LQ $
 		\item Choose a matrix $R$ s.t. 
 		\begin{equation}
 		G=\left(\begin{array}{c}
 		Q \\ R \\
 		\end{array}\right)
 		\end{equation}
 		has full rank. Then the ODE
 		\begin{equation}
 		\dot{\bz} = G\bff(\bz)
 		\end{equation}
 		is in the desired form. End algorithm. 
 	\end{enumerate}
 \end{algorithm}
 We will now demonstrate this algorithm with an example. 
 \begin{example}[Example of algorithm \ref{alg}] 
 	
 	Consider the following vector field in five dimensions 
 	\begin{equation}
 	\frac{\mathrm{d}}{\mathrm{d}t}\left(\begin{array}{c}
 	x_1\\x_2\\x_3\\x_4\\x_5\\
 	\end{array}\right)
 	=
 	\left( 
 	\begin{array}{c}
 	-2\,x_{{1}}x_{{3}}+5\,{x_{{2}}}^{2}+{x_{{4}}}^{2}-2\,{x_{{5}}}^{2}+x_{
 		{2}}+x_{{5}}
 	\\
 	\left( -2\,x_{{3}}+15 \right) x_{{1}}+5\,{x_{{2}}}^{2}+{x_{{4}}}^{2}-
 	2\,{x_{{5}}}^{2}+6\,x_{{2}}+12\,x_{{3}}+8\,x_{{4}}+x_{{5}}
 	\\
 	\left( 2\,x_{{3}}+2 \right) x_{{1}}-5\,{x_{{2}}}^{2}-{x_{{4}}}^{2}+2
 	\,{x_{{5}}}^{2}+x_{{3}}+2\,x_{{4}}-x_{{5}}
 	\\
 	\left( 2\,x_{{3}}-7 \right) x_{{1}}-5\,{x_{{2}}}^{2}-{x_{{4}}}^{2}+2
 	\,{x_{{5}}}^{2}-3\,x_{{2}}-6\,x_{{3}}-3\,x_{{4}}-x_{{5}}
 	\\
 	x_{{1}}x_{{3}}-2\,{x_{{2}}}^{2}+{x_{{5}}}^{2}
 	\\	
 	\end{array}
 	\right)
 	\end{equation}
 	We will now implement algorithm 1 to decouple this ODE into a set of linear and non-linear equations. First look for an affine Darboux polynomial that has a constant cofactor. We find the following Darboux polynomial
 	\begin{equation}
 	p_1 = x_{{1}}+x_{{2}}+2\,x_{{4}}
 	\end{equation}
 	with cofactor 1 is the only affine Darboux polynomial that has constant cofactor. Now eliminate a variable from the vector field by setting $p_1=0$, for example, by substituting $x_1 = - x_2 - 2x_4$ into $\bff$. 
 	\begin{equation}
 	\frac{\mathrm{d}}{\mathrm{d}t}\left(\begin{array}{c}
 	x_2\\x_3\\x_4\\x_5\\
 	\end{array}\right)
 	=
 	\left( 
 	\begin{array}{c}
 	\left( -2\,x_{{3}}+15 \right)  \left( -x_{{2}}-2\,x_{{4}} \right) +5
 	\,{x_{{2}}}^{2}+{x_{{4}}}^{2}-2\,{x_{{5}}}^{2}+6\,x_{{2}}+12\,x_{{3}}+
 	8\,x_{{4}}+x_{{5}}
 	\\
 	\left( 2\,x_{{3}}+2 \right)  \left( -x_{{2}}-2\,x_{{4}} \right) -5\,{
 		x_{{2}}}^{2}-{x_{{4}}}^{2}+2\,{x_{{5}}}^{2}+x_{{3}}+2\,x_{{4}}-x_{{5}}
 	\\
 	\left( 2\,x_{{3}}-7 \right)  \left( -x_{{2}}-2\,x_{{4}} \right) -5\,{
 		x_{{2}}}^{2}-{x_{{4}}}^{2}+2\,{x_{{5}}}^{2}-3\,x_{{2}}-6\,x_{{3}}-3\,x
 	_{{4}}-x_{{5}}
 	\\
 	\left( -x_{{2}}-2\,x_{{4}} \right) x_{{3}}-2\,{x_{{2}}}^{2}+{x_{{5}}}
 	^{2}
 	\\	
 	\end{array}
 	\right)
 	\end{equation}
 	We now look for Darboux polynomials with constant cofactor on the resulting four dimensional vector field and find 
 	\begin{equation}
 	p_2 = -x_{{2}}+x_{{3}}-2\,x_{{4}}
 	\end{equation}
 	also with cofactor 1. We now substitute $x_2=x_3-2x_4$ and compute constant cofactor Darboux polynomials on the resulting three dimensional system 
 	\begin{equation}
 	\frac{\mathrm{d}}{\mathrm{d}t}\left(\begin{array}{c}
 	x_3\\x_4\\x_5\\
 	\end{array}\right)
 	=
 	\left( 
 	\begin{array}{c}
 	- \left( 2\,x_{{3}}+2 \right) x_{{3}}-5\, \left( x_{{3}}-2\,x_{{4}}
 	\right) ^{2}-{x_{{4}}}^{2}+2\,{x_{{5}}}^{2}+x_{{3}}+2\,x_{{4}}-x_{{5}
 	}
 	\\
 	- \left( 2\,x_{{3}}-7 \right) x_{{3}}-5\, \left( x_{{3}}-2\,x_{{4}}
 	\right) ^{2}-{x_{{4}}}^{2}+2\,{x_{{5}}}^{2}-9\,x_{{3}}+3\,x_{{4}}-x_{
 		{5}}
 	\\
 	-{x_{{3}}}^{2}-2\, \left( x_{{3}}-2\,x_{{4}} \right) ^{2}+{x_{{5}}}^{2
 	}
 	\\
 	
 	\end{array}
 	\right)
 	\end{equation}
 	this vector field has the polynomial $p_3 = x_4-x_3$ with cofactor $1$. Substituting $x_3=x_4$ into the above three dimensional vector field, we find that there are no more affine Darboux polynomials with constant cofactor. 
 	Setting $\underline{\bp}(\bx) = (p_1(x),p_2(x),p_3(x))^T$, we can write $\underline{\bp}(\bx) = Q \bx$, where 
 	\begin{equation}
 	Q=\left[ \begin {array}{ccccc} 1&1&0&2&0\\ \noalign{\medskip}0&-1&1&-2&0
 	\\ \noalign{\medskip}0&0&-1&1&0\end {array} \right] .
 	\end{equation}
 	It must therefore be possible to write the original ODE as the following decoupled set of ODEs
 	\begin{align}
 	\dot{\underline{\bp}} =& L \underline{\bp}, \\
 	\dot{\by} =& \bg(\by,\underline{\bp}),
 	\end{align}
 	where $L$ is triangular. Multiplying the ODE $\dot{\bx}=\bff(\bx)$ by $Q$, we get the identity $Q\bff(\bx) = L\underline{\bp}$. In other words, $Q\bff(\bx) = B\bx$ must be linear. The coefficients of the matrix $L$ can therefore be easily found by solving the linear problem $B=LQ$. Doing so, we find 
 	\begin{equation}
 	L =  \left[ \begin {array}{ccc} 1&0&0\\ \noalign{\medskip}1&1&0
 	\\ \noalign{\medskip}-9&-6&1\end {array} \right] .
 	\end{equation}
 	We can now define the linear transformation 
 	\begin{equation}
 	\left(\begin{array}{c}
 	p \\ y \\
 	\end{array}\right) = \left[\begin{array}{c}
 	Q \\ R \\
 	\end{array}\right] \bx
 	\end{equation} 
 	where $R$ is any $2\times n$ matrix whose rows are independent to the rows of $Q$. We will choose $R = [0,I_2]$. Hence by the above linear transformation, the ODE reads
 	\begin{align}
 	\dot{\underline{\bp}} &= L \underline{\bp}, \\
 	\dot{y_1} &= 2\,p_{{1}}p_{{3}}-5\,{p_{{2}}}^{2}-{y_{{1}}}^{2}+2\,{y_{{2}}}^{2}-7\,p
 	_{{1}}-3\,p_{{2}}-6\,p_{{3}}-3\,y_{{4}}-y_{{5}},
 	\\
 	\dot{y_2} &=p_{{1}}p_{{3}}-2\,{p_{{2}}}^{2}+{y_{{2}}}^{2}.
 	\end{align}
 \end{example}
 \section{Conclusion}
In this paper we have shown that all Runge-Kutta methods will preserve all affine second integrals of ODEs. This implies that all Runge-Kutta methods will preserve all rational first integrals where the numerator and denominator are affine. Furthermore, we have demonstrated a method for detecting the existence of such integrals in an ODE, if it exists. We have also shown that Runge-Kutta methods cannot preserve irreducible quadratic second integrals, in general. \\

The constant cofactor case is also studied, which gives rise to certain iteration-index dependent first integrals. Due to this, one can eliminate this iteration-index-dependence by taking quotients, which leads to a the preservation of certain modified first integrals by Runge-Kutta methods (corollary \ref{cor:nonratintegrals}). This is the first example where an arbitrary Runge-Kutta method preserves such an integral. For ODE systems that possess affine higher integrals, we show that in some cases, the diagonal Pad{\'e} Runge-Kutta methods can preserve quadratic integrals exactly. Lastly, we present an algorithm that detects the existence of such affine integrals as well as determines the linear transformation that decouples the affine subsystem of higher integrals from the nonlinear components. 
 \section*{Acknowledgments}
 BK Tapley would like to thank GRW Quispel for many useful comments and suggestions, especially with regards to the constant cofactor case.

 \bibliographystyle{unsrt}
 \bibliography{bibliography.bib}
 
 \appendix
 \section{Derivation of stability matrix for corollary \ref{thm:stabfun1}}\label{A}
 Let $\varphi_h$ denote the Runge-Kutta map defined by equations \eqref{butchertable}, \eqref{rk0} and \eqref{rk1}. Now let $G =(g_1,g_2,...,g_s)^T $ denote the $s\times n$ matrix whose $i$'th row is $g_i^T$. Then if $f(x) = \lambda x$ as in the case of our test equation \eqref{rk0} can be written as 
 \begin{align}
 G = \mathbb{1}x^T + h\lambda \mcA G = (I-h\lambda \mcA)^{-1}\mathbb{1}x^T
 \end{align}
 We therefore have 
 \begin{equation}
 g_i=G^T\hat{e_i}=\left(\hat{e_i}^T(I-h\lambda \mcA)^{-1}\mathbb{1}\right) x
 \end{equation} 
 where $\hat{e_i}\in\mathbb{R}^s$ is the $i$'th canonical unit basis vector with a 1 in the $i$'th component and 0 elsewhere. Inserting this into equation \eqref{rk1} gives
 \begin{align}
 \varphi_h(x) = & x + h \lambda \sum_{j=1}^{s} b_{j}\hat{e_i}^T(I-h\lambda \mcA)^{-1}\mathbb{1} x \\
 = & (1 + h \lambda  b^T (I-h\lambda \mcA)^{-1}\mathbb{1}) x\\
 := & R(\lambda h) x
 \end{align}
\end{document}